\newtheorem{theorem}{Theorem}
\newtheorem{proposition}[theorem]{Proposition}
\newtheorem{lemma}[theorem]{Lemma}
\newtheorem{corollary}[theorem]{Corollary}
\theoremstyle{definition}
\newtheorem{remark}[theorem]{Remark}
\newtheorem{example}{Example}
\begin{document}

\title[On the canonical real structure on wonderful varieties]
{On the canonical real structure on wonderful varieties}
\author[D. Akhiezer and S. Cupit-Foutou ]{D. Akhiezer and S. Cupit-Foutou}
\thanks{Supported by SFB/TR 12,
{\it Symmetry and universality in mesoscopic systems}, of
the Deutsche Forschungsgemeinschaft}

\address{Dmitri Akhiezer, Institute for Information Transmission Problems, B.Karetny per. \ 19, 101447 Moscow, Russia}
\email{akhiezer@iitp.ru}

\address{St\'ephanie Cupit-Foutou, Ruhr-Universit\"at Bochum, NA 4/67, Bochum, 
Germany}
\email{stephanie.cupit@rub.de}

\begin{abstract}
We 
study equivariant real structures on spherical varieties.
We call such a structure canonical if it is equivariant
with respect to the involution defining the split real form 
of the acting reductive group $G$. 
We
prove the existence and uniqueness of a canonical structure
for homogeneous spherical varieties $G/H$ with $H$ self-normalizing
and for
their wonderful embeddings.
For a strict wonderful variety
we give an estimate of the number of real form orbits on the
set of real points. 
\end{abstract}
\maketitle

\tableofcontents

\section*{Introduction}

A real structure on a complex manifold $X$ is an anti-holomorphic involution $\mu : X \to X$.
The set of fixed points $X^\mu $ of $\mu $ is called the real part of $(X, \mu)$. 
If it is clear from the context which $\mu $ is considered then
the real part will be denoted by ${\mathbb R}X$. 
In our paper, we are interested in the algebraic case.
This means that $X$ is a complex algebraic variety, which we will assume non-singular though this
is not needed for the definition of a real structure. 
Also, $\mu $ is algebraic in the sense that for any function $f$ regular at $x \in X$
the function $\overline {f\circ \mu}$ is regular at $\mu (x)$.

It is not easy to classify all real structures on a given variety $X$.
Much work is done for compact toric varieties, where one has the notion of a toric real structure.  
Namely, if $X$ is a toric variety acted on by an algebraic torus $T$ then
a real structure $\mu : X \to X$ is said to be toric if
$\mu $ normalizes the $T$-action.
It is natural to classify toric real structures up to conjugation by toric
automorphisms, i.e.,
by automorphisms of $X$ normalizing the $T$-action. Again,
such a classification is not easy. 
For toric surfaces and threefolds it was obtained by C. Delaunay; see \cite{De}.

In the toric case, there is a notion of a canonical real structure.
This is a real structure which is usually defined as complex conjugation on the open $T$-orbit,
but we prefer a slightly different and more general definition. 
Let $\sigma : T \to T$  be the involutive anti-holomorphic automorphism of the real Lie group
$T$ which coincides with inversion on the maximal compact torus $T_c \subset T$. 
If $T \simeq ({\mathbb C}^*)^n$ then the real form defined by $\sigma $
is split, i.e., isomorphic to $({\mathbb R}^*)^n$. 
A canonical real structure on a toric variety $X$ is a real structure which satisfies
$$\mu (a\cdot x) = \sigma(a) \cdot \mu (x) \leqno{(*)}$$ 
for all $x \in X,\ a\in T$.
Of course, a canonical real structure is uniquely defined by the image of one point in the open orbit
and any two canonical real structures are related by 
$\mu ^\prime (x)=t\cdot \mu(x)$, where $t\in T$ and $\sigma (t)\cdot t = 1$.

Our goal is to generalize this notion to varieties acted on by reductive algebraic groups.
Let $G$ be a connected reductive algebraic group defined over ${\mathbb C}$.
We recall that an algebraic involution $\theta $ of $G$ is called a Weyl involution if $\theta (t) = t^{-1}$
for all $t$ in some algebraic torus $T \subset G$.  
Such an involution is known to be unique up to conjugation by an inner automorphism. 
By Cartan Fixed Point Theorem, one can always find
a maximal compact subgroup $K \subset G$, such that $\theta (K) = K$.
Then the corresponding Cartan involution $\tau $ commutes with $\theta$
and the product $\sigma = \tau \circ \theta = \theta \circ \tau $
is an involutive anti-holomorphic automorphism of $G$ defining the split real form.
Assume now that $G$ acts on an algebraic variety $X$.
Then a real structure $\mu : X \to X$ is called canonical if $\mu $
satisfies the above condition $(*)$ for all $x\in X,\ a \in G$. 
We remark that it suffices to check $(*)$ only for $a \in K$, in which case
one can replace $\sigma $ by $\theta$.

The most natural generalization of toric varieties to
the case of reductive algebraic groups is the notion
of spherical varieties, which we recall in Section~\ref{wonderful}.

Suppose $X$ is affine and non-singular.
For $X$ spherical a canonical real structure 
$\mu : X \to X$ always exists (\cite{A}, Theorem 1.2). 
However, in the non-spherical case such a 
structure may not exist even if $X$ 
is homogeneous (\cite{AP}, Proposition 6.3).

In this paper, we study the problem of existence
of a canonical real 
structure  
for all homogeneous spherical varieties, affine or not affine.
We also
consider the similar question for some complete spherical varieties, 
namely the so-called wonderful varieties. 
The definition of wonderful and strict wonderful varieties is recalled in 
Section~\ref{wonderful}.

We start with a finiteness theorem for real structures on wonderful varieties 
(Theorem~\ref{fin}).
Then we prove some topological properties of a canonical real
structure on a wonderful variety provided such a structure exists
(Theorem~\ref{non-emptyness and connectedness}).
After that we show that a canonical
real structure exists and is uniquely defined for 
homogeneous spherical varieties $G/H$ with
$H$ self-normalizing (Theorem~\ref{canonicalstructure}) and for their wonderful
completions (Theorem~\ref{canonicalwonderful}). 
As an application we show that for a spherical subgroup $H \subset G$,
whose normalizer is self-normalizing, there is always an anti-holomorphic
involution $\sigma : G \to G$, defining the split real form
and such that $\sigma (H) = H$ (Theorem~\ref{application}).
Finally, we give an estimate of the total number of real form
orbits in ${\mathbb R}
X$ for the canonical real structure on a strict wonderful variety $X$
(Theorem~\ref{estimationrealorbits}).
    
\section{Wonderful varieties}\label{wonderful}

Recall that $G$ is a connected reductive algebraic group over ${\mathbb C}$. A normal algebraic $G$-variety $X$ is called spherical if $X$ contains an open orbit of a Borel subgroup $B \subset G$.
We denote the open orbits of $B$ and $G$ on $X$ by $X_B^\circ $ and $X_G^\circ $ respectively. 

The following definition is due to D. Luna (\cite{Lu96}).
An algebraic $G$-variety $X$ is called \textsl{wonderful} if
\smallbreak\noindent
{\rm(i)}\enspace
$X$ is complete and smooth;
\smallbreak\noindent
{\rm(ii)}\enspace
$X$ admits an 
open $G$-orbit whose complement 
consists of a finite union of smooth prime divisors $X_1\ldots, X_r$ with
normal crossings;
\smallbreak
\noindent
{\rm(iii)}\enspace
the $G$-orbit closures of $X$ 
are given by the partial intersections of the $X_i$'s.

Remark that a wonderful variety $X$ has a unique closed $G$-orbit.
The latter is the full intersection of the boundary divisors $X_i$ of $X$.

D. Luna proved that wonderful $G$-varieties are spherical.
The connected center of $G$ acts trivially on a wonderful variety, so if
$G$ acts effectively then $G$ is semisimple. If a spherical homogeneous space
$G/H$ admits an equivariant wonderful embedding then such an
embedding is unique up to a $G$-isomorphism; see ~\cite{Lu96}
and references therein.

By a theorem of F. Knop, 
a wondferful equivariant embedding of
$G/H$ always exists if the spherical subgroup $H$ is self-normalizing in $G$;
see \cite{K}.

\begin{proposition}\label{norm} 
Let $X$ be a wonderful variety and let $X_G^\circ = G/H$.
Then $H$ has finite index in its normalizer.
\end{proposition}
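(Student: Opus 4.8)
The plan is to reduce the proposition to the strict convexity of the valuation cone of $G/H$ and then invoke a theorem of Knop. We may assume $G$ semisimple, since the connected center acts trivially on a wonderful variety. Recall from the Luna--Vust theory that the spherical homogeneous space $G/H$ carries a lattice $\Lambda$ of $B$-eigenweights occurring in $\mathbb C(G/H)$, a dual space $\mathcal N_{\mathbb Q}=\operatorname{Hom}(\Lambda,\mathbb Q)$ of rank equal to the rank of $G/H$, and a valuation cone $\mathcal V\subseteq\mathcal N_{\mathbb Q}$ (the images of the $G$-invariant valuations of $\mathbb C(G/H)$). By a theorem of Knop (see \cite{K}) the group $N_G(H)/H$ is finite if and only if $\mathcal V$ is strictly convex, i.e.\ contains no line; so it suffices to establish strict convexity of $\mathcal V$ for $X_G^\circ=G/H$.

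To do this I would exploit the geometry of the wonderful embedding $X$. By properties (ii) and (iii) of the definition, the $G$-stable prime divisors of $X$ are exactly the boundary divisors $X_1,\dots,X_r$, and their full intersection is the closed orbit $G/P$; since $X$ is toroidal, the maximal cone of its colored fan is spanned by the images $\nu_1,\dots,\nu_r\in\mathcal N_{\mathbb Q}$ of these divisors, it is strictly convex, and, corresponding to an orbit of codimension $r$, it has dimension $r=\dim\mathcal N_{\mathbb Q}$; hence the $\nu_i$ are linearly independent. One then checks that these $\nu_i$ in fact generate all of $\mathcal V$ --- equivalently, writing $\gamma_1,\dots,\gamma_r$ for the spherical roots of $X$, that $\mathcal V=\{v\in\mathcal N_{\mathbb Q}:\langle v,\gamma_i\rangle\le 0\ \text{for all}\ i\}$. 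As the $\gamma_i$ are then $r$ linearly independent linear forms on an $r$-dimensional space, $\mathcal V$ is simplicial, hence strictly convex, which is the conclusion we want.

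The step I expect to be the main obstacle is precisely showing that $\mathcal V$ equals this cone and is not larger, i.e.\ that the $r$ boundary divisors account for every $G$-invariant valuation of $\mathbb C(G/H)$. This is where the defining features of a wonderful variety (completeness, normal crossings of the $X_i$, orbit closures being partial intersections) are genuinely used: concretely, via the local structure of $X$ along $G/P$, which realizes a $B$-chart as $R_u(P)\times\mathbb A^r$ with the $X_i$ cutting out the coordinate hyperplanes, together with the completeness of $X$ to rule out any additional $G$-invariant valuation. Everything else is formal. As an alternative that bypasses Knop's theorem, one can identify $N_G(H)/H$ with $\operatorname{Aut}^G(X)$ --- an equivariant automorphism preserves the open orbit $G/H$, acts there as right translation by an element of $N_G(H)/H$, and extends to $X$ by uniqueness of the wonderful embedding --- and then prove $\operatorname{Aut}^G(X)$ finite: its identity component stabilizes each of the finitely many $G$-orbits, hence fixes the closed orbit $G/P$ pointwise since $P$ is self-normalizing, and the same local structure forces it to act trivially on all of $X$; here too the completeness of $X$ carries the load, in excluding the ``diagonal'' rescalings of the affine chart.
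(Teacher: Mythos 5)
Your argument is correct and is essentially the proof the paper delegates to its citation: the paper's own ``proof'' is just a pointer to Section 4.4 of \cite{Br97}, where finiteness of $N_G(H)/H$ is obtained precisely as you do it, by identifying the valuation cone of $G/H$ with the strictly convex simplicial cone spanned by the valuations of the $r$ boundary divisors of the complete, simple, toroidal embedding $X$ (the inclusion $\mathcal V\subseteq\operatorname{cone}(\nu_1,\dots,\nu_r)$ being the Luna--Vust completeness criterion you correctly single out as the substantive step) and then invoking the Brion--Pauer/Knop criterion that $N_G(H)/H$ is finite if and only if $\mathcal V$ contains no line. Note that this inclusion alone already gives strict convexity, so the equality $\mathcal V=\operatorname{cone}(\nu_i)$ is not needed, and your alternative route via $\operatorname{Aut}^G(X)$ is too sketchy at its final step (excluding the diagonal $\mathbb C^*$-rescalings of the slice) to stand on its own, but the main route suffices.
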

\begin{proof}
See Section 4.4 in~\cite{Br97}.
\end{proof}

A wonderful variety is called \textsl{strict} if each of its points
has a self-normalizing stabilizer.
The class of strict
wonderful varieties includes flag varieties and De\,Concini-Procesi compactifications (\cite{DP83}). 
Strict wonderful varieties are classified in~\cite{BCF}. 

For any variety $X$, let ${\rm Aut}(X)$ denote the automorphism group of $X$.
We will need the following proposition describing the identity
component ${\rm Aut}_0(X)$ for a wonderful $G$-variety $X$.

\begin{proposition}[\cite{Br2}, Theorem\,2.4.2] \label{ss} 
If $X$ is wonderful under $G$ then ${\rm Aut}_0(X)$
is semisimple and $X$ is wonderful under the action of ${\rm Aut}_0(X)$.
\end{proposition}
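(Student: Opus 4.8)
The plan is to set $G' := {\rm Aut}_0(X)$ and then to check, one step at a time, that $G'$ is affine, that $X$ is spherical under $G'$, that $G'$ is reductive and in fact semisimple, and finally that $X$ satisfies conditions {\rm(i)}--{\rm(iii)} with $G$ replaced by $G'$. Since $X$ is complete and smooth, $G'$ is a connected algebraic group acting faithfully on $X$, with $\Lie G' = H^0(X,\mathcal{T}_X)$; moreover the $G$-action is realized by a connected closed subgroup of $G'$, which I will continue to denote by $G$, so that $G \subseteq G'$ from now on.

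First I would show that $G'$ is affine. As a spherical variety, $X$ is rational (its open $B$-orbit is a rational homogeneous space), so the Albanese variety of $X$ is trivial; by a theorem of Matsumura, the identity component of the automorphism group of a complete variety with trivial Albanese is a linear algebraic group, so $G'$ is affine. Next, a Borel subgroup $B \subseteq G$ maps to a connected solvable subgroup of $G'$, hence lies in some Borel subgroup $B' \subseteq G'$; since $B$ already has a dense orbit in $X$, so does $B'$, and therefore $X$ is spherical under $G'$.

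The crux — and the step I expect to be the main obstacle — is to prove that the unipotent radical of $G'$ is trivial. This is exactly where one must exploit the geometry of wonderful, as opposed to arbitrary complete spherical, varieties. The input I would use is that a wonderful variety is log homogeneous: writing $\partial X = X_1 + \dots + X_r$ for the normal crossing boundary, the $G$-action makes the natural map $\Lie G \otimes \mathcal{O}_X \to \mathcal{T}_X(-\log\partial X)$ surjective and $H^0(X,\mathcal{T}_X(-\log\partial X)) = \Lie G$. Since $\mathcal{T}_X(-\log\partial X)$ is then a globally generated subsheaf of $\mathcal{T}_X$ whose cokernel is supported on $\partial X$, one gains enough control over $H^0(X,\mathcal{T}_X) = \Lie G'$ to conclude that $G'$ contains no nontrivial normal unipotent subgroup; equivalently, the unipotent radical of $G'$ acts trivially on $X$ and so is trivial by faithfulness. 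I expect this reductivity statement to be the delicate point, and the remaining steps to be routine given it.

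Granting reductivity, I would finish as follows. To verify {\rm(i)}--{\rm(iii)} for $G'$: condition {\rm(i)} does not involve the group. For {\rm(ii)} and {\rm(iii)}, the open $G'$-orbit contains the open $G$-orbit, so its complement is a $G'$-stable closed subset of $X_1 \cup \dots \cup X_r$; being the boundary of a spherical variety it is pure of codimension one, hence a union $\bigcup_{i \in J} X_i$ of part of the original boundary divisors, which still form a normal crossing family of smooth prime divisors. Toroidality of $X$ passes from $G$ to $G'$, and the local structure theorem for toroidal spherical varieties then identifies the $G'$-orbit closures with the partial intersections $\bigcap_{i \in I} X_i$, $I \subseteq J$; in particular $X$ has a unique closed $G'$-orbit and is wonderful under $G'$. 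Finally, by the remark at the beginning of this section the connected centre of a group acting on a wonderful variety acts trivially, so the connected centre of $G'$ is trivial by faithfulness and $G'$ is semisimple.
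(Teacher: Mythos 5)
The paper itself gives no argument for this proposition: it is quoted verbatim from Brion [Br2, Theorem~2.4.2], so what you are really attempting is to reprove Brion's theorem. Measured against that, your proposal has a genuine gap exactly at the point you yourself flag as the crux. The assertion that, because $\mathcal T_X(-\log\partial X)$ is globally generated with $H^0(X,\mathcal T_X(-\log\partial X))=\Lie G$ and with cokernel supported on $\partial X$, ``one gains enough control over $H^0(X,\mathcal T_X)=\Lie G'$ to conclude that $G'$ contains no nontrivial normal unipotent subgroup'' is not an argument but a placeholder. Note that $H^0(X,\mathcal T_X)$ really is strictly larger than $\Lie G$ in examples (for $X=\mathbb P^1\times\mathbb P^1$ under the diagonal $PGL_2$ one has $\Lie G=\mathfrak{sl}_2$ while $H^0(X,\mathcal T_X)=\mathfrak{sl}_2\oplus\mathfrak{sl}_2$), so knowing the sections of the log tangent subsheaf does not by itself constrain the radical of ${\rm Aut}_0(X)$; one must analyze how the extra sections sit in the normal bundles of the boundary divisors, and this is precisely where Brion's proof does its real work. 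Moreover, the input you propose to use, $H^0(X,\mathcal T_X(-\log\partial X))=\Lie G$ for a wonderful $G$-variety with $G$ acting faithfully, is itself a nontrivial theorem of the same depth (due to Brion), so as written the hard content is either assumed or omitted.

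There are also secondary soft spots in the final step: that the complement of the open ${\rm Aut}_0(X)$-orbit is pure of codimension one (hence a subfamily of the $X_i$), that ``toroidality passes from $G$ to $G'$'', and that the orbit closures of $G'$ are exactly the partial intersections $\bigcap_{i\in I}X_i$, are all asserted rather than proved; condition (iii) of wonderfulness is not automatic from (i)--(ii) and needs an actual argument (or again a citation). Your affineness step (rationality, trivial Albanese, Matsumura), the inclusion $G\subseteq G'$, the density of a Borel orbit for $G'$, and the triviality of the connected centre (which also follows from the finiteness of the centralizer of $G$ in ${\rm Aut}(X)$ proved in Proposition~1.4 of this paper) are fine. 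But without a genuine proof that the unipotent radical of ${\rm Aut}_0(X)$ is trivial, the proposal does not establish the proposition; as it stands it is an outline that defers the theorem's essential difficulty to the very reference the paper cites.
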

 
In addition, we have the following proposition, for which we could not find a reference.

\begin{proposition}
Let $X$ be a wonderful variety. 
Then ${\rm Aut}_0(X)$ has finite index in ${\rm Aut}(X)$.
\end{proposition}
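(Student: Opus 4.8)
The plan is to relate $\operatorname{Aut}(X)$ to the automorphism group of a semisimple algebraic group. By Proposition~\ref{ss} I may and do assume $G=\operatorname{Aut}_0(X)$, so that $G$ is semisimple, acts faithfully on $X$, and $X$ is wonderful under $G$; in particular $G$ has a unique open orbit $X_G^\circ=G/H$ in $X$. For any $\varphi\in\operatorname{Aut}(X)$ the assignment $(g,x)\mapsto\varphi(g\cdot\varphi^{-1}x)$ is an algebraic family of automorphisms of $X$ parametrized by $G$, hence corresponds to a morphism of algebraic groups $G\to\operatorname{Aut}_0(X)=G$, namely $g\mapsto\varphi g\varphi^{-1}$. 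Applying this also to $\varphi^{-1}$, we see that $G$ is normal in $\operatorname{Aut}(X)$ and that conjugation defines a homomorphism $\Phi\colon\operatorname{Aut}(X)\to\operatorname{Aut}(G)$ into the automorphism group of the algebraic group $G$, with $\Phi(G)=\operatorname{Inn}(G)$.

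The heart of the argument will be to prove that $\ker\Phi$, i.e., the centralizer $Z$ of $G$ in $\operatorname{Aut}(X)$, is finite. An element $z\in Z$ permutes the $G$-orbits of $X$, hence preserves the unique open one, so it restricts to a $G$-equivariant automorphism of $G/H$. The group of such automorphisms is $N_G(H)/H$, which is finite by Proposition~\ref{norm}. Since $X$ is separated and $X_G^\circ$ is dense, an automorphism of $X$ is determined by its restriction to $X_G^\circ$; thus $Z$ embeds into $N_G(H)/H$ and is finite.

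To conclude, recall that for $G$ semisimple the automorphism group $\operatorname{Aut}(G)$ is an algebraic group whose identity component is $\operatorname{Inn}(G)$ and whose component group $\operatorname{Out}(G)$ is finite. Therefore $\operatorname{Im}\Phi$, which contains $\operatorname{Inn}(G)=\Phi(G)$, satisfies $[\operatorname{Im}\Phi:\operatorname{Inn}(G)]\le|\operatorname{Out}(G)|<\infty$, so that $[\operatorname{Aut}(X):\Phi^{-1}(\operatorname{Inn}G)]<\infty$. On the other hand $\Phi^{-1}(\operatorname{Inn}G)=G\cdot Z$: if $\Phi(\varphi)=\operatorname{Int}(g)$ for some $g\in G$, then $g^{-1}\varphi\in\ker\Phi=Z$. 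Hence $[\Phi^{-1}(\operatorname{Inn}G):G]=[GZ:G]\le|Z|<\infty$, and multiplying the two indices gives $[\operatorname{Aut}(X):\operatorname{Aut}_0(X)]\le|\operatorname{Out}(G)|\cdot|Z|<\infty$.

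The only input specific to wonderful varieties is the finiteness of $N_G(H)/H$ (Proposition~\ref{norm}), which controls the centralizer $Z$; this is the step I expect to be the crux. Everything else is either the standard structure theory of $\operatorname{Aut}(G)$ for semisimple $G$ or formal group theory, the one point requiring a little care being the identification of the algebraic group structure on $\operatorname{Aut}_0(X)$ used implicitly in the first paragraph.
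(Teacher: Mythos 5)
Your proposal is correct and takes essentially the same route as the paper: a conjugation homomorphism from ${\rm Aut}(X)$ to the automorphism group of ${\rm Aut}_0(X)$, finiteness of its kernel (the centralizer of $G$) via Proposition~\ref{norm} together with the fact that an equivariant automorphism is determined by its value at one point of the dense open orbit, and semisimplicity of ${\rm Aut}_0(X)$ (Proposition~\ref{ss}) to bound the outer part. The only differences are presentational: the paper proves the kernel bound by hand instead of invoking ${\rm Aut}^G(G/H)\cong N_G(H)/H$, and phrases the conclusion as finiteness of the component group rather than your explicit index estimate.
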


\begin{proof}
Write $X_G^\circ = G/H$,
where $H$ is the stabilizer of a point $x_0 \in X_G^\circ $. Let $N$
be the normalizer of $H$ in $G$. By Proposition~\ref{norm} the orbit
$N\cdot x_0$ is finite.
For any $\alpha \in {\rm Aut}(X)$ and $g\in {\rm Aut}_0(X)$
put 
$$\iota_\alpha (g) = \alpha \cdot g \cdot \alpha ^{-1}.$$
Let $L$ denote the group of all automorphisms of the group ${\rm Aut}_0(X)$.
Then we have the homomorphism 
$$\varphi: {\rm Aut}(X) \to L,\ \ \varphi (\alpha ) = \iota _\alpha,$$
whose image contains the group of inner automorphisms of
${\rm Aut}_0(X)$. Since the latter group is semisimple by Proposition~\ref{ss},
${\rm Im}(\varphi)$ has finitely many connected components.  
We now prove that ${\rm Ker}(\varphi )$ is finite. It then
follows that ${\rm Aut}(X)$ has finitely many connected components.

If $\alpha \in {\rm Ker}(\varphi)$ then $\alpha $ commutes with
all automorphisms from $G$. Thus $\alpha (gx_0) = g\alpha (x_0)$ 
for all $g\in G$. Since $X$ has only one open $G$-orbit, we
have $\alpha (X_G^\circ) = X_G^\circ $ and,
in particular, $\alpha (x_0) = ax_0$ for some $a\in G$.
Now take $g\in H$. Then
$$ax_0 = \alpha (x_0) = \alpha (gx_0) = g\alpha (x_0) = gax_0,$$
hence $aga^{-1} \in H$ and $a \in N$. Since the $N$-orbit of $x_0$
is finite, there are only finitely many possibilities for $ax_0$.
But, for $\alpha (x_0)$ fixed, $\alpha $
is uniquely determined on the open $G$-orbit and thus everywhere on $X$.
\end{proof}

\section{Finiteness theorem}

The group ${\rm Aut}(X)$ acts on the set of real structures on $X$ by
$$
\mu \mapsto \alpha \cdot \mu \cdot \alpha ^{-1}. 
$$
For $X$ wonderful,
we prove that this action has only finitely many orbits.

\begin{theorem}\label{fin}
Let $X$ be a wonderful variety. Then, up to an automorphism of $X$,
there are only finitely many real structures on $X$.
\end{theorem}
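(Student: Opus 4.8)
The plan is to fix one real structure as a base point and measure every other against it, which turns the statement into the finiteness of a nonabelian Galois cohomology set. If $X$ carries no real structure there is nothing to prove, so fix a real structure $\mu_0\colon X\to X$. A routine check with the definition of an algebraic real structure shows that the composite of two algebraic anti-holomorphic maps is an algebraic morphism; hence, for any real structure $\mu$, the map $\alpha:=\mu\circ\mu_0$ lies in $\mathrm{Aut}(X)$ (it is invertible with inverse $\mu_0\circ\mu$), and $\mu=\alpha\circ\mu_0$. Conversely, for $\beta\in\mathrm{Aut}(X)$ set $c(\beta):=\mu_0\circ\beta\circ\mu_0$; then $c$ is an involutive automorphism of the group $\mathrm{Aut}(X)$ (conjugating a holomorphic map by an anti-holomorphic one gives a holomorphic map), and $\mu=\alpha\circ\mu_0$ is an involution precisely when $\alpha\cdot c(\alpha)=\mathrm{id}$. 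Thus $\mu\mapsto\mu\circ\mu_0$ identifies the set of real structures on $X$ with the set $Z^1$ of $1$-cocycles of $\mathbb Z/2=\langle s\rangle$ acting on $\mathrm{Aut}(X)$ through $c$ (the cocycle sending $s$ to $\alpha$).

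Next I would compare this parametrization with the action $\mu\mapsto\beta\mu\beta^{-1}$ of $\mathrm{Aut}(X)$ on real structures. Writing $\mu=\alpha\circ\mu_0$ and inserting $\mu_0\mu_0=\mathrm{id}$ one finds $\beta\mu\beta^{-1}=\beta\alpha\,(\mu_0\beta^{-1}\mu_0)\,\mu_0=\bigl(\beta\,\alpha\,c(\beta)^{-1}\bigr)\circ\mu_0$, so on the cocycle $\alpha$ the action is twisted conjugation. Consequently the set of real structures on $X$ modulo $\mathrm{Aut}(X)$ is in bijection with the first nonabelian cohomology set $H^1(\mathbb Z/2,\mathrm{Aut}(X))$ for the action $c$. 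Now $\mathrm{Aut}_0(X)$ is semisimple by Proposition~\ref{ss} and has finite index in $\mathrm{Aut}(X)$ by the last proposition of Section~\ref{wonderful}; since $X$ is complete, $\mathrm{Aut}(X)$ is a group scheme locally of finite type over $\mathbb C$, and these two facts make it an affine algebraic group of finite type, i.e. a linear algebraic group. Moreover the real structure $\mu_0$ on the complete variety $X$ induces by descent a real structure on the algebraic group $\mathrm{Aut}(X)$ — a linear algebraic group $A$ over $\mathbb R$ with $A\times_{\mathbb R}\mathbb C=\mathrm{Aut}(X)$ — whose Galois action of $\mathrm{Gal}(\mathbb C/\mathbb R)$ is exactly $c$.

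With this in hand, $H^1(\mathbb Z/2,\mathrm{Aut}(X))$ is the Galois cohomology set $H^1(\mathbb R,A)$ of a linear algebraic group over $\mathbb R$, which is finite by the finiteness theorem of Borel and Serre for Galois cohomology over a local field (see Serre, \emph{Galois Cohomology}, Ch.~III); this proves the theorem. If one prefers a self-contained argument, one can instead run a dévissage along $1\to\mathrm{Aut}_0(X)\to\mathrm{Aut}(X)\to F\to1$ with $F$ finite, combining finiteness of $H^1(\mathbb R,-)$ for the semisimple group $\mathrm{Aut}_0(X)$ with that for the finite group $F$. I expect the step requiring the most care to be the claim that conjugation by $\mu_0$ is not merely an abstract involution of $\mathrm{Aut}(X)$ but an \emph{anti-regular} automorphism — that $\mu_0$ genuinely defines a real form of the algebraic group $\mathrm{Aut}(X)$ — together with the reduction of $\mathrm{Aut}(X)$ to a linear algebraic group of finite type; both hinge on the completeness of $X$ and on the two propositions about $\mathrm{Aut}(X)$ recalled above, and both are needed before any Galois-cohomological finiteness result can be applied.
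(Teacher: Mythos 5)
Your proposal is correct and rests on the same two pillars as the paper: the identification of the set of real structures modulo ${\rm Aut}(X)$ with the nonabelian cohomology set $H^1({\mathbb Z}_2,{\rm Aut}(X))$ for the involution $\alpha\mapsto\mu_0\alpha\mu_0$ (which you verify in detail, whereas the paper merely asserts it), and the two structural facts that ${\rm Aut}_0(X)$ is semisimple and of finite index in ${\rm Aut}(X)$. Where you diverge is the final finiteness step: the paper performs a d\'evissage along ${\rm Aut}_0(X)\triangleleft{\rm Aut}(X)$ using Corollary 3 in I.5.5 of \cite{S}, so that Borel--Serre \cite{BS} is only ever applied to the linear algebraic group ${\rm Aut}_0(X)$ and its twists, and no algebraicity statement about the full group ${\rm Aut}(X)$ is needed; your main route instead upgrades ${\rm Aut}(X)$ itself to a linear algebraic group (via representability of the automorphism functor of the complete variety $X$ and the finiteness of the component group) carrying a real form obtained by Galois descent along $\mu_0$, and then quotes Borel--Serre for possibly disconnected groups over ${\mathbb R}$. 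That works, but it imports heavier machinery (Grothendieck's representability theorem, and the anti-regularity of conjugation by $\mu_0$ on the group scheme, which you rightly flag as the delicate point); note that even the paper's route implicitly needs the twisted ${\mathbb Z}_2$-actions on ${\rm Aut}_0(X)$ to be anti-regular, so this issue is not created by your approach, only made more visible. Your fallback argument via the exact sequence $1\to{\rm Aut}_0(X)\to{\rm Aut}(X)\to F\to 1$ with $F$ finite is precisely the paper's proof. In short: same cohomological framework, with your primary route trading the paper's elementary d\'evissage for a cleaner but more machinery-dependent direct application of Borel--Serre to the whole automorphism group.
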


\begin{proof}

Assume that $X$ has at least one real structure $\mu _0 $.
Then 
${\rm Aut}(X)$-orbits on the set of real structures on $X$ are in one-to-one
correspondence with the cohomology classes from $H^1({\mathbb Z}_2, {\rm Aut}(X))$,
where the generator $\gamma \in {\mathbb Z}_2$ acts on ${\rm Aut}(X)$ by sending
$\alpha$ to  $\mu _0 \alpha \mu _0$.
We now use the exact cohomology sequence,
associated with the normal subgroup ${\rm Aut}_0(X) \triangleleft
{\rm Aut}(X)$. From Corollary 3 in I.5.5 of \cite{S}
it follows that $H^1({\mathbb Z}_2, {\rm Aut}(X))$
is finite if the following two conditions are fulfilled:

(1) ${\rm Aut}(X)/{\rm Aut}_0(X)$ is finite;

(2) for the ${\mathbb Z}_2$-action 
on ${\rm Aut}_0(X)$ obtained by twisting the given action by
an arbitrary cocycle $a \in Z^1({\mathbb Z}_2, {\rm Aut}(X))$ 
the corresponding cohomology 
set
$H^1({\mathbb Z}_2, \,_a{\rm Aut}_0(X))$ is finite.

\noindent
We have just proved (1). Since ${\rm Aut}_0(X)$ is linear algebraic,
(2) follows from Borel-Serre's Theorem (see \cite{BS}).
\end{proof}

\section{General properties of equivariant real structures}
                      
Let $X$ be a non-singular complex algebraic variety with a
real structure $\mu : X \to X$. Suppose $G$ is a connected algebraic
group acting on $X$ and let $\sigma : G \to G$ be an involutive
anti-holomorphic
automorphism of $G$ as a real algebraic group.
Then the fixed point subgroup
$$
G^\sigma  = \{g \in G \,\vert \, \sigma(g) = g\}
$$ 
is real algebraic and its identity component $G^\sigma_0$
is a closed real Lie subgroup in $G$.

We call $\mu $ a $\sigma$-equivariant real structure if
$$
\mu(g\cdot x) = \sigma(g)\cdot \mu (x)\quad \mbox{ for all $g\in G,\ x\in X$.}
$$
Later on, we will be interested in the case when
$G^\sigma $ is a split real form of a reductive
group $G$;
see Introduction. However, in the following elementary lemma
$G$ and $\sigma $ are arbitrary.

\begin{lemma}\label{dimension}
Let $H \subset G$ be an algebraic subgroup and let $X = G/H$. Suppose $x_0
\in {\mathbb R}X$. Then the connected component of ${\mathbb R}X$
through $x_0$ coincides with $G^\sigma_0 \cdot x_0$. The orbit
$G^\sigma _0 \cdot x_0 $ is Zariski dense in $X$.
\end{lemma}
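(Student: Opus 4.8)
The plan is to identify the connected component of $\mathbb{R}X=X^\mu$ through $x_0$ with the orbit $G^\sigma_0\cdot x_0$ by an open-and-closed argument, and then to deduce Zariski density from the fact that a real form of a reductive (or here arbitrary) algebraic group is Zariski dense in the appropriate sense. First I would observe that since $\mu$ is $\sigma$-equivariant and $x_0\in\mathbb{R}X$, any $g\in G^\sigma$ satisfies $\mu(g\cdot x_0)=\sigma(g)\cdot\mu(x_0)=g\cdot x_0$, so $G^\sigma\cdot x_0\subseteq\mathbb{R}X$ and in particular the connected orbit $G^\sigma_0\cdot x_0$ lies in $\mathbb{R}X$. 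Since $G^\sigma_0$ is connected, this orbit is connected, so it is contained in the connected component $C$ of $\mathbb{R}X$ through $x_0$.

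Next I would show $G^\sigma_0\cdot x_0$ is open in $\mathbb{R}X$, which combined with connectedness forces equality with $C$. For this, the key computation is a tangent-space/dimension count at $x_0$: the differential $d\mu_{x_0}$ is an anti-linear involution of $T_{x_0}X$ whose fixed space is $T_{x_0}\mathbb{R}X$, and this has real dimension equal to $\dim_{\mathbb C}X$. On the other hand, the tangent space to the orbit $G^\sigma_0\cdot x_0$ at $x_0$ is the image of $\mathrm{Lie}(G^\sigma)=\mathfrak{g}^{d\sigma}$ under the infinitesimal action map $\mathfrak{g}\to T_{x_0}X$. The point is that $d\sigma$ is an anti-linear involution of $\mathfrak{g}$, the map $\mathfrak{g}\to T_{x_0}X$ is complex-linear and surjective onto $T_{x_0}(G\cdot x_0)=T_{x_0}X$ with kernel $\mathfrak{h}=\mathrm{Lie}(H)$, and one checks $d\sigma$ preserves $\mathfrak{h}$; hence the real fixed space $\mathfrak{g}^{d\sigma}$ surjects onto $(T_{x_0}X)^{d\mu_{x_0}}=T_{x_0}\mathbb{R}X$. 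This gives $T_{x_0}(G^\sigma_0\cdot x_0)=T_{x_0}\mathbb{R}X$, so the smooth submanifold $G^\sigma_0\cdot x_0$ is open in $\mathbb{R}X$ near $x_0$; translating by $G^\sigma_0$ shows it is open everywhere along the orbit, and being open and closed (as an orbit it is locally closed, and openness plus the partition of $\mathbb{R}X$ into $G^\sigma_0$-orbits of the same dimension makes it closed in $\mathbb{R}X$) and connected, it equals $C$.

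For the density statement, I would argue that the Zariski closure $Y:=\overline{G^\sigma_0\cdot x_0}$ in $X$ is a $G^\sigma_0$-stable closed subvariety containing a real point through which $\mathbb{R}X$ is smooth of full dimension. Since $G^\sigma_0$ is Zariski dense in $G$ — here I use that $\sigma$ is an anti-holomorphic involution, so $G^\sigma$ is a real form of $G$ as a real algebraic group and its complexification is all of $G$, whence the Zariski closure of $G^\sigma_0$ is a closed subgroup of $G$ of the same dimension, i.e. $G$ itself by connectedness — the set $G\cdot Y=\overline{G\cdot x_0}=X$ is contained in $\overline{Y}=Y$, so $Y=X$. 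Alternatively, and more directly: the real submanifold $G^\sigma_0\cdot x_0$ has real dimension $\dim_{\mathbb C}X$, and a real-analytic submanifold of a complex manifold of real dimension equal to the complex dimension and which is not contained in any proper complex-analytic subset must be Zariski dense; the non-containment follows from the tangent-space computation, since $T_{x_0}(G^\sigma_0\cdot x_0)$ is a totally real subspace of $T_{x_0}X$ (it is the fixed space of an anti-linear involution) and hence spans $T_{x_0}X$ over $\mathbb{C}$, so $G^\sigma_0\cdot x_0$ cannot lie in a proper subvariety through $x_0$, and $G^\sigma_0$-homogeneity propagates this.

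The main obstacle I anticipate is the verification that $d\sigma$ preserves $\mathfrak{h}=\mathrm{Lie}(H)$, which is needed for the surjectivity of $\mathfrak{g}^{d\sigma}\to T_{x_0}\mathbb{R}X$; this is not automatic for an arbitrary subgroup $H$ and an arbitrary $\sigma$. However it does follow from the $\sigma$-equivariance of $\mu$ together with $x_0\in\mathbb{R}X$: writing $\mathfrak{h}=\mathrm{Lie}(G_{x_0})$, the relation $\mu(g\cdot x_0)=\sigma(g)\cdot x_0$ shows $\sigma(G_{x_0})=G_{\mu(x_0)}=G_{x_0}$, so $\sigma$ stabilizes $H$ up to the identity component and $d\sigma(\mathfrak{h})=\mathfrak{h}$. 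Once this is in hand the rest is bookkeeping with anti-linear involutions and dimension counts; care is only needed to keep track of real versus complex dimensions and to note that everything is happening in the honest (Hausdorff) topology for the connectedness part and in the Zariski topology for the density part.
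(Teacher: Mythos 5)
Your proof is correct, but it reaches the two assertions by somewhat different means than the paper. For the identification of the component with the orbit, the paper never computes the orbit's tangent space exactly: it only proves the inequality $\dim_{\mathbb R} G^\sigma_0\cdot x_0 \ge n$ by observing that $G^\sigma_0\cap G_{x_0}$ is totally real in the stabilizer $G_{x_0}$, hence of real dimension at most $\dim_{\mathbb C}G_{x_0}$; together with $\dim_{\mathbb R}\mathbb{R}X=n$ this already makes the orbit open in $\mathbb{R}X$, which is all that is needed. Your intertwining-and-averaging argument yields the sharper equality $T_{x_0}(G^\sigma_0\cdot x_0)=T_{x_0}\mathbb{R}X$; note that the step you single out as the main obstacle, $d\sigma(\mathfrak h)=\mathfrak h$, is actually superfluous: if $\alpha:\mathfrak g\to T_{x_0}X$ is the surjective infinitesimal action and $\alpha\circ d\sigma=d\mu_{x_0}\circ\alpha$, then for any $w$ fixed by $d\mu_{x_0}$ and any preimage $v$, the average $\tfrac12(v+d\sigma(v))$ is a fixed preimage, with no condition on the kernel. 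For Zariski density, your primary argument (the Zariski closure of the orbit is stable under $G^\sigma_0$, which is Zariski dense in $G$ because $\mathfrak g^{d\sigma}+i\,\mathfrak g^{d\sigma}=\mathfrak g$, hence is $G$-stable and so equals $G\cdot x_0=X$) is genuinely different from the paper's, which simply invokes the fact that a totally real submanifold of maximal dimension cannot be contained in a proper subvariety; that fact is your alternative argument, and it is also what is needed to make your tangent-space sketch rigorous, since a proper subvariety may be singular at $x_0$ (one should pass to a smooth point of the closure of the orbit, or use that a holomorphic function vanishing on a maximal totally real submanifold vanishes identically). Your group-theoretic route leans on the homogeneity $G\cdot x_0=X$, which is harmless here, while the paper's totally-real argument is more local and transfers directly to the later applications in the paper.
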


\begin{proof}
Let $n$ be the complex dimension of $X$. Then the real dimension of
${\mathbb R}X$ is also $n$. Since $\mu $ is $\sigma $-equivariant,
we have $G^\sigma (x_0) \subset {\mathbb R}X$. Thus it suffices to show that
${\rm dim}\,G^\sigma _0(x_0) \ge n$. Let $G_{x_0}$ be the
stabilizer of $x_0$ in $G$. Then
$G^\sigma _0\cap G_{x_0}$ is a totally real submanifold in $G_{x_0}$,
hence
$${\rm dim}_{\mathbb R} \, G^\sigma _0 \cap G_{x_0} \le {\rm dim}_{\mathbb C}
\, G_{x_0},$$
and so we obtain
$${\rm dim} \, G^\sigma _0 \cdot x_0 = {\rm dim}\, G^\sigma _0
- {\rm dim}\, G^\sigma _0 \cap G_{x_0}
\ge {\rm dim}_{\mathbb C}\, G - {\rm dim}_{\mathbb C}\, G_{x_0} = n.$$
Finally, 
since 
$G^\sigma _0 \cdot x_0 \subset X$ is a totally real submanifold
of maximal possible dimension, $G^\sigma _0 \cdot x_0 $
is not contained in an algebraic
subvariety of dimension smaller than $n$.

\end{proof}

From now on $G$ is reductive.
We need some preparatory lemmas on the involution
$\sigma: G \to G$ defining the split real form of $G$.
We also fix some notation, which will be used all the time
in the sequel. So let $T \subset G$ be a torus, on which 
$\sigma $ acts as the involutive anti-holomorphic
automorphism with fixed point subgroup being the non-compact
real part of $T$. In coordinates, if $T \simeq ({\mathbb C}^*)^r$
then $$\sigma (z_1,\ldots,z_r) = (\bar z_1, \ldots, \bar z_r),\ \ 
z=(z_1,\ldots,z_r) \in ({\mathbb C}^*)^r.$$

\begin{lemma}\label{transformedweight}
Let $\chi$ be a character of $T$. Then $\overline{\chi\circ\sigma}=\chi$.
\end{lemma}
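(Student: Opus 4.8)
The plan is to reduce everything to the explicit coordinates that were just fixed above the statement. In the identification $T \simeq (\mathbb{C}^*)^r$ under which $\sigma(z_1,\dots,z_r) = (\bar z_1,\dots,\bar z_r)$, every character $\chi$ of $T$ is of the form $\chi(z_1,\dots,z_r) = z_1^{a_1}\cdots z_r^{a_r}$ for a uniquely determined tuple of integers $(a_1,\dots,a_r)$. I would then simply compute
$$\chi(\sigma(z_1,\dots,z_r)) = \bar z_1^{\,a_1}\cdots \bar z_r^{\,a_r} = \overline{z_1^{a_1}\cdots z_r^{a_r}} = \overline{\chi(z_1,\dots,z_r)},$$
so that $\chi\circ\sigma = \overline{\chi}$ as maps $T \to \mathbb{C}^*$; taking complex conjugates of both sides gives $\overline{\chi\circ\sigma} = \chi$, which is the assertion. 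As a byproduct this records that $\overline{\chi\circ\sigma}$ is again a holomorphic character, as it must be since $\sigma$ is anti-holomorphic and hence $\chi\circ\sigma$ is anti-holomorphic.

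Alternatively, one can argue without coordinates: $\sigma$ restricts to inversion on the compact torus $T_c$, and every character of $T$ has absolute value $1$ on $T_c$, so $(\chi\circ\sigma)|_{T_c} = (\chi^{-1})|_{T_c} = \overline{\chi}|_{T_c}$; hence $\overline{\chi\circ\sigma}$ and $\chi$ are two characters of $T$ agreeing on the Zariski-dense subgroup $T_c$, and therefore they coincide. I would present the coordinate computation, since the coordinates have just been introduced and it is the shortest route.

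There is essentially no obstacle here. The only point worth making explicit is the meaning of the bar over $\chi\circ\sigma$ — pointwise complex conjugation of a function on $T$, legitimate precisely because $\sigma$ is anti-holomorphic — and that is already implicit in the setup; the whole proof is the one-line computation above.
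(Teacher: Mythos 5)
Your proof is correct. The only difference from the paper is one of packaging: the paper checks the identity on the split real points of $T$ (where $\sigma(t)=t$ and $\chi(t)\in\mathbb{R}$, so $\overline{\chi(\sigma(t))}=\chi(t)$) and then concludes $\overline{\chi\circ\sigma}=\chi$ everywhere by density/analytic extension, whereas your main argument computes $\chi\circ\sigma=\overline{\chi}$ pointwise on all of $T$ in the coordinates fixed just before the lemma, so no extension step is needed at all; your alternative argument via the compact torus $T_c$ is the same ``agree on a Zariski-dense subgroup'' scheme as the paper's, just with $T_c$ in place of the split real form. Both routes are sound (note that $\overline{\chi\circ\sigma}$ is again an algebraic character since $\sigma$ is anti-holomorphic, which is what legitimizes either density argument), and the direct coordinate computation is arguably the cleanest given that the coordinates were introduced precisely for this purpose.
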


\begin{proof}
Take $t$ in the non-compact real part of $T$.
Then $\sigma (t) = t$ and the value of $\chi$ is real.
This shows that the weights $\overline{\chi\circ\sigma}$ and $\chi$ coincide
on real points, hence also everywhere by analytic extension.
\end{proof}

\begin{lemma}\label{rootspaces} Let $\mathfrak g$ be the Lie algebra
of $G$.
Denote the associated involution of $\mathfrak g$ again by $\sigma$.
Then all root spaces  
in $\mathfrak g$ are $\sigma$-stable.
\end{lemma}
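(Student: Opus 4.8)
The plan is to transport the coordinate description of $\sigma$ on $T$ to the Lie algebra level and combine it with the argument of Lemma~\ref{transformedweight}. Since $\sigma : G \to G$ is anti-holomorphic, its differential at the identity, still denoted $\sigma$, is an involutive \emph{conjugate-linear} automorphism of the real Lie algebra $\mathfrak g$. Because $\sigma(T) = T$, this differential preserves $\mathfrak t = \Lie(T)$; and the coordinates in which $\sigma(z_1,\ldots,z_r) = (\bar z_1,\ldots,\bar z_r)$ on $T$ show that $\sigma$ acts on $\mathfrak t$ as complex conjugation with respect to the split (non-compact) real form $\mathfrak t_{\mathbb R} = \Lie(T^\sigma)$. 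In particular, exactly as in the proof of Lemma~\ref{transformedweight}, for any root $\alpha \in \mathfrak t^*$ the linear functional $\overline{\alpha\circ\sigma}$ agrees with $\alpha$ on $\mathfrak t_{\mathbb R}$ (where $\alpha$ takes real values and $\sigma$ is the identity), hence $\overline{\alpha\circ\sigma} = \alpha$ by $\mathbb C$-linearity.

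Next I would fix a root $\alpha$, a vector $X \in \mathfrak g_\alpha$, and any $H \in \mathfrak t$, and compute, using $\sigma^2 = \mathrm{id}$, that $\sigma$ is a Lie algebra homomorphism, and that $\sigma$ is conjugate-linear:
$$[H,\sigma(X)] = \big[\sigma(\sigma(H)),\,\sigma(X)\big] = \sigma\big([\sigma(H),X]\big) = \sigma\big(\alpha(\sigma(H))\,X\big) = \overline{\alpha(\sigma(H))}\,\sigma(X) = \big(\overline{\alpha\circ\sigma}\big)(H)\cdot\sigma(X).$$
Thus $\sigma(X)$ is again a $\mathfrak t$-weight vector, with weight $\overline{\alpha\circ\sigma}$. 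By the previous paragraph this weight equals $\alpha$, so $\sigma(X) \in \mathfrak g_\alpha$, i.e.\ $\sigma(\mathfrak g_\alpha) \subseteq \mathfrak g_\alpha$. Applying the same to the involution $\sigma$ once more gives the reverse inclusion, so $\sigma(\mathfrak g_\alpha) = \mathfrak g_\alpha$, which is the assertion.

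There is no serious obstacle here; the statement is essentially a one-line consequence of Lemma~\ref{transformedweight} once the setup is in place. The only points deserving a word of care are that $\sigma$ indeed preserves $\mathfrak t$ (which follows from $\sigma(T)=T$ in the standing setup) and that a root, being the differential of a character of $T$, is covered by the computation underlying Lemma~\ref{transformedweight} so that $\overline{\alpha\circ\sigma}=\alpha$; both are immediate. Alternatively, one may run the whole argument at the group level via $\sigma(\operatorname{Ad}(t)X) = \operatorname{Ad}(\sigma(t))\,\sigma(X)$ and then quote Lemma~\ref{transformedweight} verbatim for the character $\alpha$.
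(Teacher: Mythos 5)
Your proof is correct and is essentially the paper's argument: the paper carries out exactly the computation you sketch in your last sentence, at the group level, applying $\sigma$ to $\operatorname{Ad}(t)X_\alpha=\alpha(t)X_\alpha$ and invoking Lemma~\ref{transformedweight} to get $\overline{\alpha\circ\sigma}=\alpha$. Your infinitesimal version (differentiating $\sigma$ and checking the weight identity on $\mathfrak t_{\mathbb R}$) is just the same idea transported to the Lie algebra, and it is sound.
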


\begin{proof}
Let $\alpha : T \to {\mathbb C}^*$
be a root, 
$\mathfrak g_\alpha $ the corresponding root space, 
$X_\alpha \in {\mathfrak g}_\alpha$, and $t\in T$. Then
$${\rm Ad}(t)\cdot X_{\alpha}
 = \alpha (t) X_{\alpha}$$ implies 
$$
{\rm Ad}(\sigma (t))\cdot \sigma(X_\alpha) = \overline{\alpha (t)} \, 
\sigma (X_\alpha)$$
or, equivalently,
$${\rm Ad}(t) \cdot \sigma(X_\alpha ) = \overline{
\alpha \circ \sigma (t)} \, \sigma (X_\alpha ) = \alpha (t)
\, \sigma (X_\alpha ),$$
where the last equality follows from
Lemma~\ref{transformedweight}. Therefore
$\sigma (X_\alpha ) \in {\mathfrak g}_\alpha$,
showing that the root spaces are $\sigma$-stable. 
\end{proof}

\begin{corollary}\label{borel}
With the above choice of $T$ we have $\sigma (B) = B$ and $\sigma (P) = P$
for any Borel subgroup $B\subset G$ containing $T$
and any parabolic subgroup $P\subset G$ containing $B$.
\end{corollary}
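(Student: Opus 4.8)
The plan is to reduce the assertion to the level of Lie algebras, where it follows immediately from Lemma~\ref{rootspaces} together with the fact that $\sigma$ preserves $T$ by construction.

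First I would deal with the Borel subgroup. Put ${\mathfrak b} = \Lie(B)$ and write the root-space decomposition ${\mathfrak g} = {\mathfrak t} \oplus \bigoplus_{\alpha}{\mathfrak g}_\alpha$ with respect to $T$, so that ${\mathfrak b} = {\mathfrak t} \oplus \bigoplus_{\alpha > 0}{\mathfrak g}_\alpha$, where $\alpha > 0$ refers to the ordering of the roots determined by $B$. Denoting the associated involution of ${\mathfrak g}$ again by $\sigma$, we have $\sigma({\mathfrak t}) = {\mathfrak t}$ because $\sigma(T) = T$, and $\sigma({\mathfrak g}_\alpha) = {\mathfrak g}_\alpha$ for every root $\alpha$ by Lemma~\ref{rootspaces}; hence $\sigma({\mathfrak b}) = {\mathfrak b}$. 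Since $\sigma$ is an automorphism of $G$ as a real algebraic group, it is a homeomorphism of $G$, so it carries the connected subgroup $B$ onto the connected subgroup of $G$ whose Lie algebra is $\sigma({\mathfrak b}) = {\mathfrak b}$; as $B$ is connected, that subgroup is $B$ itself, and therefore $\sigma(B) = B$.

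For a parabolic subgroup $P \supseteq B$ the same reasoning applies: such a $P$ is a standard parabolic, with $\Lie(P) = {\mathfrak b} \oplus \bigoplus_{\alpha \in S}{\mathfrak g}_{-\alpha}$ for a suitable set $S$ of positive roots, and each summand is $\sigma$-stable by what was just said, so $\sigma(\Lie P) = \Lie P$; since $P$ is connected, this forces $\sigma(P) = P$.

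The argument is short and I do not expect a genuine obstacle; the only point worth a word of care is that $\sigma$ is merely anti-holomorphic, not a morphism of complex algebraic groups, but it is nonetheless a homeomorphism of $G$, so the usual correspondence between connected subgroups and their Lie algebras still applies. If one prefers to avoid Lie algebras, one can argue instead that $\sigma(B)$ is a Borel subgroup containing $\sigma(T) = T$, that the twisted $\sigma$-action $\chi \mapsto \overline{\chi \circ \sigma}$ on the character group $X^*(T)$ fixes every root (by the computation in the proof of Lemma~\ref{rootspaces}), and hence that $\sigma$ cannot send $B$ to any of the other Borel subgroups containing $T$; the parabolic case then follows since a parabolic containing $B$ is determined by the set of simple roots whose root spaces it contains.
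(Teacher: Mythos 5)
Your argument is correct and follows essentially the same route as the paper: the Lie algebras of $B$ and $P$ are spanned by $\mathfrak t$ and root spaces, all of which are $\sigma$-stable by Lemma~\ref{rootspaces}, and connectedness then gives $\sigma(B)=B$ and $\sigma(P)=P$. Your extra care in passing from the Lie algebra to the group (and the alternative group-level argument) is fine but not a different method.
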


\begin{proof} The Lie algebras of $\mathfrak p$ and $\mathfrak b$
are spanned by root spaces and the Lie algebra $\mathfrak t$ of $T$,
so Lemma~\ref{rootspaces} applies.
\end{proof}

We will assume throughout the paper that $T, B$ and $P$
are chosen as in Corollary~\ref{borel}.

\begin{proposition}\label{flags}
With the above choice of $P$, 
define a self-map of the flag variety $X = G/P$ by
 $\mu (g\cdot P) = \sigma(g)\cdot P$.
Then $\mu $ is a $\sigma $-equivariant real structure on $X$.
Such a structure is uniquely defined.
The set ${\mathbb R}X$ is the unique closed $G^\sigma _0$-orbit on $X$.
In particular, the (possibly disconnected) real form
$G^\sigma $ is transitive on ${\mathbb R}X$.

\end{proposition}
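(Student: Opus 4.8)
The plan is to establish the four assertions in sequence: (a) $\mu$ is well-defined, (b) $\mu$ is an anti-holomorphic involution satisfying $(*)$, (c) ${\mathbb R}X = X^\mu$ equals $G^\sigma_0\cdot eP$, and (d) this is the unique closed $G^\sigma_0$-orbit. For (a), the formula $\mu(gP) = \sigma(g)P$ is well-defined precisely because $\sigma(P) = P$, which is exactly Corollary \ref{borel}; if $gP = g'P$ then $g^{-1}g' \in P$, so $\sigma(g)^{-1}\sigma(g') = \sigma(g^{-1}g') \in \sigma(P) = P$. For (b), anti-holomorphy and the involution property are inherited from $\sigma$ (which is an anti-holomorphic involution of $G$), and the equivariance $\mu(a\cdot gP) = \sigma(ag)P = \sigma(a)\sigma(g)P = \sigma(a)\cdot\mu(gP)$ is immediate. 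Uniqueness of a $\sigma$-equivariant real structure follows because $G$ is transitive on $X$: any such $\nu$ is determined by $\nu(eP)$, and $\nu(eP)$ must be a $G^\sigma$-fixed, more precisely $\sigma$-compatible, point; since $P$ is parabolic and $\sigma(P) = P$, one checks the stabilizer condition forces $\nu(eP) = eP$, hence $\nu = \mu$. (Concretely: writing $\nu(eP) = g_0 P$, equivariance gives $p g_0 P = g_0 P$ for $p$ ranging over a set generating enough of $P$ after applying $\sigma$; the cleanest argument is that $\nu\circ\mu^{-1}$ is a $G$-equivariant automorphism of $G/P$, and since $P$ is self-normalizing, ${\rm Aut}_G(G/P)$ is trivial, so $\nu = \mu$.)

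For (c), by Lemma \ref{dimension} applied with $x_0 = eP$, the connected component of ${\mathbb R}X$ through $eP$ is exactly $G^\sigma_0\cdot eP$ and it is Zariski dense in $X$. But $X = G/P$ is a complete variety, and $G^\sigma_0\cdot eP$ is an orbit of a real algebraic group, hence a locally closed real submanifold; being Zariski dense in the irreducible complete variety $X$ and of maximal dimension, I claim it is actually closed in the real topology. The point is that $X$ is compact (in the Euclidean topology, being complete over ${\mathbb C}$), so ${\mathbb R}X$ is compact; an orbit of a Lie group acting on a compact manifold need not be closed in general, but here I would argue that $G^\sigma_0\cdot eP$ is closed because $P\cap G^\sigma_0$ is a parabolic subgroup of the real reductive group $G^\sigma_0$ — indeed $P$ is $\sigma$-stable and contains the $\sigma$-stable torus $T$ whose fixed points form a maximal split torus, so $P\cap G^\sigma$ contains a minimal parabolic of the split group $G^\sigma$, forcing $G^\sigma_0/(P\cap G^\sigma_0)$ to be a (complete) real flag manifold, which is compact; its image in $X$ is therefore compact, hence closed. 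Since ${\mathbb R}X$ is a manifold of the same dimension with $G^\sigma_0\cdot eP$ open and closed in it, and ${\mathbb R}X$ is then a finite disjoint union of such orbits, I still need connectedness — but actually the statement only asserts ${\mathbb R}X$ \emph{is} the unique closed orbit, so it suffices to show every connected component of ${\mathbb R}X$ is a single closed $G^\sigma_0$-orbit of this form, and that there is only one. The latter follows from the Bruhat-type decomposition: the $\sigma$-equivariance forces any real point's stabilizer to contain a $G^\sigma_0$-conjugate of $P\cap G^\sigma_0$, and since all parabolics of $G^\sigma_0$ of a given type are conjugate and the real flag manifold is connected, there is exactly one such orbit, which therefore exhausts ${\mathbb R}X$.

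For (d), that $G^\sigma_0\cdot eP$ is the \emph{unique closed} $G^\sigma_0$-orbit on all of $X$ (not just inside ${\mathbb R}X$): any $G^\sigma_0$-orbit closure contains a closed orbit, and a closed orbit of a real reductive group acting on a complete homogeneous space $G/P$ corresponds to a real point fixed by a maximal compact of $G^\sigma_0$ up to the relevant parabolic; by the same conjugacy of real parabolics this closed orbit is unique and coincides with ${\mathbb R}X$. Transitivity of $G^\sigma$ on ${\mathbb R}X$ is then immediate from ${\mathbb R}X = G^\sigma_0\cdot eP \subseteq G^\sigma\cdot eP \subseteq {\mathbb R}X$.

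The main obstacle I anticipate is step (c): passing from ``$G^\sigma_0\cdot eP$ is Zariski dense and a totally real submanifold of top dimension'' (which Lemma \ref{dimension} hands us for free) to ``$G^\sigma_0\cdot eP$ is closed in the Euclidean topology and exhausts a whole connected component, and there is only one such component.'' The clean route is to identify $P\cap G^\sigma_0$ as a genuine parabolic subgroup of the real reductive group $G^\sigma_0$ — this uses crucially that $\sigma$ was set up so that $T^\sigma$ is a maximal split torus and $P\supseteq B\supseteq T$ is $\sigma$-stable (Corollary \ref{borel}) — so that $G^\sigma_0/(P\cap G^\sigma_0)$ is a compact real flag manifold mapping onto the orbit; combined with the known fact that $G^\sigma$ acts on $G/P$ with finitely many orbits (a Matsuki-type or Bruhat-decomposition input) and that the unique closed one is a real flag manifold, everything falls into place. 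If one wants to avoid quoting real-group structure theory, an alternative is a direct argument: ${\mathbb R}X$ is compact, $G^\sigma_0\cdot eP$ is open in ${\mathbb R}X$ by dimension count, so its complement in ${\mathbb R}X$ is a closed $G^\sigma_0$-invariant set of strictly smaller dimension — but a proper real-algebraic $G^\sigma_0$-invariant subset of ${\mathbb R}X$ that is Zariski dense in nothing of top dimension must actually be empty once one checks ${\mathbb R}X$ has no ``boundary strata,'' which again is cleanest via the flag-manifold description. Either way, the reductive structure of $G^\sigma$ on the parabolic $P$ is the crux.
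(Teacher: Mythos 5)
The first half of your proposal coincides with the paper's argument and is correct: well-definedness of $\mu$ from $\sigma(P)=P$ (Corollary~\ref{borel}), anti-holomorphy, involutivity and $\sigma$-equivariance are immediate, and uniqueness follows because $\nu\circ\mu$ (or $\nu\circ\mu^{-1}$) is a $G$-equivariant automorphism of $G/P$, hence the identity since $P$ is self-normalizing; your preliminary ``stabilizer'' variant also works, since $\nu(eP)=g_0P$ and $\sigma(p)g_0P=g_0P$ for all $p\in P$ force $g_0\in N_G(P)=P$.

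The gap is in the closed-orbit assertion. The paper disposes of it in two lines: by Lemma~\ref{dimension} each connected component of ${\mathbb R}X$ is a $G^\sigma_0$-orbit, automatically closed since ${\mathbb R}X$ is compact, and then it invokes Wolf's theorem \cite{W} that a real form acting on a complex flag manifold has a \emph{unique} closed orbit; this single citation yields at once connectedness of ${\mathbb R}X$, its identification with that orbit, uniqueness among all $G^\sigma_0$-orbits of $X$, and $G^\sigma$-transitivity. Your substitute for this input does not hold up at the decisive points. The claim that ``$\sigma$-equivariance forces any real point's stabilizer to contain a $G^\sigma_0$-conjugate of $P\cap G^\sigma_0$'' is essentially the statement you are trying to prove: a real point $gP$ has $\sigma$-stable isotropy group $gPg^{-1}$, and the whole issue is whether this parabolic, conjugate to $P$ over ${\mathbb C}$, is conjugate to it under $G^\sigma_0$; the fact that makes this true is the Borel--Tits conjugacy theorem for parabolic subgroups defined over ${\mathbb R}$, which you never name, and even granting it you only obtain $G^\sigma$-transitivity on ${\mathbb R}X$ (passing to $G^\sigma_0$ needs, e.g., $G^\sigma=G^\sigma_0\,T^\sigma$ with $T^\sigma\subset P$). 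More seriously, the proposition asserts that ${\mathbb R}X$ is the unique closed $G^\sigma_0$-orbit on \emph{all} of $X$, and your point (d) (``a closed orbit corresponds to a real point fixed by a maximal compact up to the relevant parabolic'') is an assertion, not an argument -- there is no a priori reason a closed $G^\sigma_0$-orbit should consist of $\mu$-fixed points; that is precisely the content of Wolf's theorem. So either cite \cite{W} (as the paper does, and as you yourself hint at via ``Matsuki-type'' input), or supply Borel--Tits plus a genuine argument excluding other closed orbits. Your closedness argument for $G^\sigma_0\cdot eP$ (image of the compact real flag manifold $G^\sigma/P^\sigma$, using that $B^\sigma$ is a minimal parabolic of the split form) is correct and is a mild variant of the paper's compactness observation, but it does not touch the uniqueness/exhaustion issue, which is the real crux.
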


\begin{proof} Clearly, the map $\mu $ is correctly defined,
anti-holomorphic,
$\sigma $-equivariant, and involutive. 
If there is another $\sigma $-equivariant real structure $\mu ^\prime$
on $X$, then the product $\mu ^\prime \cdot \mu$ is an automorphism
of $X$ commuting with the $G$-action. Since $P$ is self-normalizing,
such an automorphism is the identity map, hence $\mu ^\prime = \mu$.
By construction,
the base point $e\cdot P$ is contained in ${\mathbb R}X$.
According to Lemma~\ref{dimension},
each connected component of ${\mathbb R}X$
is a closed $G^\sigma_0$-orbit. By
\cite{W} such an orbit is unique,
so ${\mathbb R}X$ is connected and coincides with that orbit.
The last assertion is now obvious.
\end{proof}

For a wonderful variety $X$, the existence of a
$\sigma$-equivariant real structure requires some work involving Luna-Vust
invariants of spherical homogeneous spaces.
We postpone this until the next section.
Here, assuming that such a structure $\mu $ exists, we study
geometric properties of ${\mathbb R}X$.
The notation is as in Section 1. In particular,
$$Y = X_1 \cap \ldots \cap X_r$$
is the unique closed $G$-orbit in $X$. Note that $\mu (Y) = Y$
and ${\mathbb R} X \cap Y$ is the unique
closed $G_0^\sigma $-orbit in $Y$ by Proposition~\ref{flags}.

\begin{theorem}\label{non-emptyness and connectedness}
Let $X$ be any wonderful $G$-variety equipped with 
a $\sigma$-equivariant real structure $\mu $. 
Then:
\smallbreak
{\rm (i)}\enspace $G^\sigma_0$ has finitely many orbits on ${\mathbb R}X$;
\smallbreak
{\rm (ii)}\enspace ${\mathbb R}X \cap Gx \ne \emptyset $ for any $x \in X$;
\smallbreak
{\rm (iii)}\enspace there is exactly one closed $G^\sigma _0$-orbit in ${\mathbb R}X$;
this orbit is contained in the closed $G$-orbit and is $G^\sigma $-homogeneous; 
\smallbreak
{\rm (iv)}\enspace ${\mathbb R}X$ is connected.
\end{theorem}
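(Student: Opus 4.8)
The plan is to build the proof around the local structure theorem for wonderful varieties, which decomposes $X$ into simple affine pieces, and then reduce each statement to known facts about the closed $G$-orbit $Y = G/P$ (via Proposition~\ref{flags}) together with the affine case from \cite{A}. First I would recall that $X$ is covered by finitely many translates $g\cdot X_0$ of an open affine $B$-stable chart $X_0$, where $X_0 \simeq R_u(P^-)\times Z$ for a suitable Levi-type affine spherical piece $Z$, and where $X_0$ meets every $G$-orbit. Since $\sigma(P)=P$ by Corollary~\ref{borel}, one can arrange $X_0$ to be $\sigma$-compatible in the sense that $\mu(X_0)=X_0$; this is the technical heart and the step I expect to cause the most trouble, because it requires tracking $\mu$ through the Luna slice / local structure construction and checking that the base point of the slice can be chosen $\mu$-fixed. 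Granting this, $\mu$ restricts to a $\sigma_L$-equivariant real structure on the affine spherical $Z$ for the relevant reductive group $L$, and $\mathbb{R}X_0 \simeq R_u(P^-)^\sigma \times \mathbb{R}Z$ is nonempty by \cite{A}, Theorem~1.2.

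For (ii): given $x\in X$, some translate $g X_0$ contains $x$; replacing $x$ by $G$-translate I may assume $x\in X_0$, and since $X_0\simeq R_u(P^-)\times Z$ with $Z$ affine spherical, the $G$-orbit $Gx$ meets $\{e\}\times Z$ in an $L$-orbit, which by \cite{A} meets $\mathbb{R}Z$; hence $\mathbb{R}X\cap Gx\neq\emptyset$. For (i): the affine case in \cite{A} (or rather the finiteness of $G^\sigma_0$-orbits on an affine spherical variety, which follows since $\mathbb{R}Z$ is a real spherical space for the split form and such spaces have finitely many orbits) gives finitely many $L^{\sigma}_0$-orbits on $\mathbb{R}Z$, hence finitely many $G^\sigma_0$-orbits on $\mathbb{R}X_0$, hence on $\mathbb{R}X$ since finitely many translates of $X_0$ cover $X$ and each $G^\sigma_0$-orbit already meets $\mathbb{R}X_0$ by the argument in (ii). Alternatively (i) follows directly from Theorem~\ref{fin} combined with \cite{S}, I.5.5, but the local-structure argument is cleaner and self-contained.

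For (iii): by Proposition~\ref{flags} applied to the closed orbit $Y=G/P$, the set $\mathbb{R}X\cap Y$ is a single closed $G^\sigma_0$-orbit and is $G^\sigma$-homogeneous. Any closed $G^\sigma_0$-orbit $\mathcal O\subset\mathbb{R}X$ has a closed $G$-orbit closure meeting it, i.e. $\overline{G\cdot\mathcal O}\supset$ some $G$-orbit whose closure is the unique closed orbit $Y$; since $\mathcal O$ is $G^\sigma_0$-invariant and closed in $\mathbb{R}X$, a limiting argument (using that $G^\sigma_0\cdot\overline{\mathcal O}$-orbit degenerations in $X$ correspond, via the valuation cone, to degenerations toward $Y$) forces $\mathcal O\subset Y$, whence $\mathcal O=\mathbb{R}X\cap Y$ by uniqueness on $Y$. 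Finally for (iv): each connected component of $\mathbb{R}X$ is closed, hence (being compact, as $X$ is complete) contains a closed $G^\sigma_0$-orbit in its closure, but all limit points of $G^\sigma_0$-orbits lie again in $\mathbb{R}X$ which is closed, so each component contains a closed $G^\sigma_0$-orbit; by (iii) there is only one such orbit, so $\mathbb{R}X$ has only one component. The main obstacle throughout is the $\sigma$-equivariant version of the local structure theorem — ensuring the affine chart and its Levi decomposition can be chosen stable under $\mu$ — and once that is in place the rest is assembling \cite{A}, Proposition~\ref{flags}, and \cite{W}.
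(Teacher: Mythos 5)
Your plan hinges on a $\mu$-stable local structure theorem that you acknowledge as ``the technical heart'' but never supply, and it is not a routine step: in this paper a $\mu$-stable slice with a compatible decomposition of the chart $X\setminus\cup_{\mathcal D(X)}D\simeq P^u\times Z$ is constructed only in Section 5, only for \emph{strict} wonderful varieties carrying the \emph{canonical} structure, using a projective embedding into ${\mathbb P}(V)$ with $V$ simple (available by \cite{P} precisely because of strictness) together with a real-form complement argument for the Levi. Theorem~\ref{non-emptyness and connectedness} concerns an arbitrary wonderful $X$ with an arbitrary $\sigma$-equivariant $\mu$, so your route rests on an equivariant refinement that is harder than anything the actual proof uses; the paper deliberately avoids it by working with the closed orbit and with valuations. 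Moreover, even granting the chart, your appeal to \cite{A}, Theorem 1.2 does not give what you need: that theorem asserts the \emph{existence} of a canonical structure on a smooth affine spherical variety, not that ${\mathbb R}Z$ is non-empty for the given restricted structure, and certainly not that every $L$- or $T$-orbit in $Z$ meets ${\mathbb R}Z$.

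The more serious gaps are in (ii) and (iii). For (ii), the essential point, which your argument never addresses, is that $\mu$ preserves each individual $G$-orbit; a priori $\mu$ could permute the boundary divisors $X_1,\dots,X_r$ nontrivially, and then the swapped orbits would have empty real part, so (ii) would fail. The paper rules this out by showing each $G$-invariant valuation $v_i$ satisfies $v_i(\overline{f\circ\mu})=v_i(f)$, checked on $B$-eigenfunctions via Lemma~\ref{transformedweight}; your proposal has no substitute for this, and the assertion that the orbit $Gx\cap Z$ ``meets ${\mathbb R}Z$ by \cite{A}'' silently presupposes exactly this orbit-stability. For (iii), the ``limiting argument using the valuation cone'' is not a proof: nothing in it explains why a closed $G^\sigma_0$-orbit must be contained in the closed $G$-orbit. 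The missing idea is Borel's fixed point theorem for the connected split solvable group $B^\sigma$ acting on the closed orbit $G^\sigma\cdot y$: a $B^\sigma$-fixed point is $B$-fixed by Zariski density, hence lies in $Y$, and Proposition~\ref{flags} then identifies the orbit with ${\mathbb R}X\cap Y$. Your deduction of (iv) from (i) and (iii) (a compact $G^\sigma_0$-stable component with finitely many orbits contains a closed orbit) is a legitimate variant of the paper's argument, but it only becomes available once (iii) is actually established. Finally, the aside that (i) ``follows directly from Theorem~\ref{fin}'' is wrong: that theorem counts real structures up to automorphism, not real-form orbits on ${\mathbb R}X$; the correct short proof of (i) is the paper's, namely that ${\mathbb R}X$ is a real algebraic set with finitely many components, each a single $G^\sigma_0$-orbit by Lemma~\ref{dimension}.
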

                                                                
\begin{proof}
\smallskip                                              
{\rm (i)} ${\mathbb R}X$ is a non-empty
real algebraic set. In particular, ${\mathbb R}X$
has finitely many connected components. By Lemma~\ref{dimension},
each of them is one $G^\sigma _0$-orbit.

\smallskip
{\rm (ii)}  
We choose $B$ as in Corollary~\ref{borel}.
We prove first that $\mu $ preserves $G$-orbits. This is clear
for the open orbit, 
because its $\mu $-image is also an open orbit which is unique. 
Since the orbit structure is well understood
(see Section 1),
it is enough to prove that $\mu (X_i) = X_i$,
where $X_i$ are the boundary divisors.
Equivalently, it suffices to prove
that the $G$-invariant valuation $v_i$ centered on $X_i$ is
$\mu$-invariant in the sense that
$$v_i(\overline {f\circ \mu}) = v_i(f)$$
for any $f \in {\mathbb C}(X)\setminus \{0\}$.
It is enough to check this on $B$-eigenfunctions 
(see Appendix ~\ref{LV-invariants}),
but then Lemma~\ref{transformedweight} yields the
required equality.

Now, let $G\cdot x$ be any $G$-orbit on $X$ and let ${\rm cl}(G\cdot x)$
be its Zariski closure within $X$. 
Since $G\cdot x$ is $\mu$-stable, so is ${\rm cl}(G\cdot x)$.
Note that $Y \subset {\rm cl}(G\cdot x)$,
therefore $Z:= {\mathbb R}X \cap {\rm cl}(G\cdot x) \ne \emptyset$.
Furthermore, ${\rm cl }(G\cdot x)$ is a non-singular variety
and $Z \subset {\rm cl}(G\cdot x)$ is a totally real
submanifold of maximal possible dimension. Therefore $Z$
is not contained in the boundary ${\rm cl }(G\cdot x) \setminus G\cdot x$.

\smallskip
{\rm (iii)} Given a closed orbit $G^\sigma_0 \cdot y \subset {\mathbb R}X$,
consider the orbit $G^\sigma \cdot y$, which is also closed, and take
a fixed point of the real form $B^\sigma \subset B$ thereon. 
The existence of such a point                                
follows from Borel's theorem for connected split solvable groups.
Assuming $B^\sigma \cdot y = y$, we also
have $B\cdot y = y$.
But then $G\cdot y$ is projective, i.e., $G\cdot y = Y$. Thus our statement
is reduced to the case of flag varieties, and we can
apply Proposition ~\ref{flags}.

\smallskip
{\rm (iv)} Assume ${\mathbb R}X$ is disconnected. Since ${\mathbb R}X \cap Y$
is connected, we can find a connected component $W$ of 
${\mathbb R}X$, such that $W \cap Y = \emptyset $. Then 
$W$ is a closed $G^\sigma_0$-orbit and $G^\sigma \cdot W$
is a closed $G^\sigma $-orbit, which
also has empty intersection with $Y$. On the other hand,
by the above argument, $B^\sigma $
has a fixed point on $G^\sigma \cdot W$. Since that
fixed point is also fixed by $B$, it belongs to the
closed $G$-orbit $Y$. We get a contradiction showing that
${\mathbb R}X$ is in fact connected.   
 
\end{proof}

\section{The canonical real structure}

Recall that $T$ and $B$ are chosen as in Corollary~\ref{borel}.

\begin{proposition}\label{stableinvariants}
Any spherical subgroup $H \subset G$ is conjugate to $\sigma(H)$ by an
inner automorphism of $G$.
\end{proposition}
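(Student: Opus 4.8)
The plan is to reduce the statement to the combinatorial classification of spherical homogeneous spaces. First I would note that $\sigma(H)$ is again a spherical subgroup: being an anti-holomorphic automorphism of $G$, the map $\sigma$ sends closed complex algebraic subgroups to closed complex algebraic subgroups, and if $BgH$ is the open $B$-orbit in $G/H$, then $\sigma(BgH)=B\cdot\sigma(g)\cdot\sigma(H)$ is open in $G$ by Corollary~\ref{borel}; hence $B$ has a dense orbit on $G/\sigma(H)$. Moreover, the assignment $g\,\sigma(H)\mapsto \sigma(g)\,H$ is a well-defined $\sigma$-equivariant anti-holomorphic isomorphism $\bar\sigma\colon G/\sigma(H)\to G/H$ of real varieties, and $f\mapsto\overline{f\circ\bar\sigma}$ is a ring isomorphism $\mathbb C(G/H)\to\mathbb C(G/\sigma(H))$.

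The core of the argument is to transport the Luna--Vust invariants along $\bar\sigma$ and to check that they are unchanged. Since $\sigma(B)=B$, the isomorphism $\bar\sigma$ carries $B$-stable prime divisors to $B$-stable prime divisors, so it identifies the colors of $G/\sigma(H)$ with those of $G/H$. By Lemma~\ref{transformedweight}, $\overline{\chi\circ\sigma}=\chi$ for every character $\chi$ of $T$, so a $B$-eigenfunction of weight $\chi$ on $G/H$ pulls back under $\bar\sigma$ to a $B$-eigenfunction of the \emph{same} weight $\chi$ on $G/\sigma(H)$; hence the two spaces have the same weight lattice $\mathcal M$, and $\bar\sigma$ induces the identity on $\mathcal N=\Hom(\mathcal M,\mathbb Q)$, so the valuation cones and the images of the colors in $\mathcal N$ agree as well. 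Finally, because $\sigma$ fixes every parabolic subgroup containing $B$ (Corollary~\ref{borel}), in particular every minimal one $P_\alpha$, the map $\bar\sigma$ commutes with the $P_\alpha$-actions up to $\sigma$, so it preserves the type of each color and the set $S^p$ of simple roots attached to the open orbit; together with the coincidence of the sets of spherical roots, this shows that $G/H$ and $G/\sigma(H)$ have the same homogeneous spherical datum.

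It then remains to invoke the uniqueness theorem for spherical homogeneous spaces, due to Losev: a spherical subgroup of $G$ is determined up to conjugacy by its homogeneous spherical datum. Since $G/H$ and $G/\sigma(H)$ share this datum, $\sigma(H)$ and $H$ are conjugate by an inner automorphism of $G$, which is the assertion. The only substantive work lies in the middle paragraph; its key inputs are Corollary~\ref{borel} and Lemma~\ref{transformedweight}, and I expect the verification that $\bar\sigma$ respects the type of each color to be the most delicate point, since this is precisely the piece of data that the weaker triple $(\mathcal M,\mathcal V,\mathcal D)$ does not record, and the point at which one genuinely uses that $\sigma$ stabilizes the individual root spaces and not merely $B$ and $T$.
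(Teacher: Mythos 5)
Your proposal is correct and follows essentially the same route as the paper: show $\sigma(H)$ is spherical, transport the combinatorial invariants along the anti-holomorphic equivariant map using Lemma~\ref{transformedweight} and Corollary~\ref{borel}, and conclude by Losev's uniqueness theorem. The only divergence is which form of that theorem you invoke: the paper uses the statement recorded in Appendix~\ref{LV-invariants}, where the invariants are the triple $\Lambda^+$, $\mathcal V$, $\mathcal D$ with each color carrying only $\rho(v_D)$ and its stabilizer $G_D$; in that formulation the step you single out as delicate (matching the types of colors and the set $S^p$ of the homogeneous spherical datum) is not needed at all, since the stabilizers are handled in one line by $G_{\iota(D)}=\sigma(G_D)=G_D$, $G_D$ being a parabolic containing $B$ and hence $\sigma$-stable by Corollary~\ref{borel}.
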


\begin{proof}
We note first that $\sigma(H)$ is a spherical subgroup of $G$.
We shall prove 
that the Luna-Vust invariants attached to $X_1 = G/H$ 
and $X_2 = G/\sigma(H)$ are the same
and then use the theorem in
Appendix~\ref{LV-invariants}, from where we also take the notations.

Consider the map $\mu: X_1 \to X_2$ defined by
$$X_1 \ni g\cdot H \ {\buildrel \mu \over\mapsto}\ 
 \sigma (g)\cdot \sigma(H) \in X_2.$$
We show that $\mu $ defines a bijection between the sets
of $B$-eigenfunctions on $X_2$ and $X_1$. Moreover,
the associated
map ${\Lambda}^+(X_2) \longrightarrow {\Lambda}^+(X_1)$
is the identity map.
Namely, let $f$ be a $B$-eigenfunction in $\mathbb C(X_2)$ 
and let $\lambda$ be its $B$-weight.
Then the complex conjugate of $f\circ \mu$ is a
$B$-eigenfunction in $\mathbb C(X_1)$ with
weight $\overline{\lambda\circ\sigma}$.
The latter is equal 
to $\lambda$ by Lemma~\ref{transformedweight}.
Since we can apply the same argument to the map $\mu ^{-1}$, 
it follows that the weight lattices
of $X_1$ and $X_2$ coincide and $\mu $ induces the identity
map on ${\Lambda}^+(X_2) = {\Lambda}^+(X_1)$. 

Further, consider the map 
$\mathcal V(X_1)\rightarrow\mathcal V(X_2)$, defined by 
$$v\mapsto (f\mapsto v({\overline {f\circ\mu}})).$$ 
This map is obviously bijective. Namely, 
its inverse is defined analogously by means of the mapping 
$\mu^{-1}: X_2 \rightarrow X_1$.

Finally, there is a natural bijection
$\iota : {\mathcal D}_{G,X_1} \to {\mathcal D}_{G,X_2}$ sending $D$
to $\pi_2[\sigma (\pi _1^{-1}(D)]$ where $\pi_1$ and $\pi _2$ are
the projections from $G$ to $X_1 $ and $X_2$ respectively. For this mapping, 
$\varphi _{\iota (D)}$ evaluated on $\overline {\lambda\circ\sigma }$
gives the same result as $\varphi _D$ evaluated on $\lambda$. By  Lemma~\ref{transformedweight},
$\overline {\lambda\circ\sigma}$ coincides with $\lambda$, and so we have
$\varphi _{\iota (D)} = \varphi _D$.
Similarly, $G_{\iota (D)} = \sigma (G_D) = G_D$
because $G_D$ is a parabolic subgroup containing $B$.
\end{proof}

\begin{theorem}\label{canonicalstructure}
Let $H$ be a spherical subgroup of 
$G$ and $a\in G$ such that $\sigma(H)=aHa^{-1}$.
The assignment
$$
\mu_0:gH\mapsto \sigma(g)aH
$$
defines an anti-holomorphic $\sigma $-equivariant
diffeomorphism of $G/H$.
If $H$ is self-normalizing then this map 
is involutive, hence a $\sigma $-equivariant real structure on $G/H$.
Furthermore, for $H$ self-normalizing a $\sigma $-equivariant 
real structure on $G/H$ is uniquely defined.
\end{theorem}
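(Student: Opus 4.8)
The plan is to verify the listed properties of $\mu_0$ in order; almost all of it is a direct computation, and the hypotheses genuinely enter only when one checks that $\sigma(a)a$ normalizes $H$ (so that $\mu_0^2$ makes sense) and, in the self-normalizing case, that it actually lies in $H$.

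First I would check well-definedness: if $g'H = gH$, write $g' = gh$ with $h\in H$; then $\sigma(g')aH = \sigma(g)\sigma(h)aH$, and from $\sigma(H) = aHa^{-1}$ one gets $a^{-1}\sigma(h)a \in H$, whence $\sigma(h)aH = aH$ and the two values agree. The map $\mu_0$ is anti-holomorphic in the algebraic sense because it is induced by the anti-holomorphic automorphism $\sigma$ of $G$ followed by right translation by $a$; and it is $\sigma$-equivariant by the identity $\mu_0(g'\cdot gH) = \sigma(g'g)aH = \sigma(g')\cdot\mu_0(gH)$.

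Next I would compute $\mu_0^2$:
$$\mu_0^2(gH) = \mu_0\bigl(\sigma(g)aH\bigr) = \sigma\bigl(\sigma(g)a\bigr)aH = g\,\sigma(a)a\,H.$$
Applying $\sigma$ to the relation $\sigma(H) = aHa^{-1}$ gives $\sigma(a)a\,H\,(\sigma(a)a)^{-1} = H$, so $n := \sigma(a)a$ lies in the normalizer $N$ of $H$; hence $gH\mapsto gnH$ is a well-defined $G$-equivariant automorphism of $G/H$, namely $\mu_0^2$. In particular $\mu_0$ is a bijection whose square is a diffeomorphism, so $\mu_0$ itself is an anti-holomorphic diffeomorphism, which proves the first assertion. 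If moreover $H$ is self-normalizing, then $N = H$, so $n\in H$ and $\mu_0^2 = \mathrm{id}$; thus $\mu_0$ is an anti-holomorphic involution, that is, a $\sigma$-equivariant real structure on $G/H$.

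Finally, for uniqueness when $H$ is self-normalizing: given another $\sigma$-equivariant real structure $\mu'$ on $G/H$, the composite $\mu_0\circ\mu'$ is holomorphic (a product of two anti-holomorphic maps) and $G$-equivariant, since $\mu_0\bigl(\mu'(g\cdot x)\bigr) = \mu_0\bigl(\sigma(g)\cdot\mu'(x)\bigr) = g\cdot\mu_0\bigl(\mu'(x)\bigr)$; hence it is a $G$-equivariant automorphism of $G/H$. As in the proof of Proposition~\ref{flags}, such an automorphism is the identity when $H$ is self-normalizing, so $\mu_0\circ\mu' = \mathrm{id}$ and $\mu' = \mu_0^{-1} = \mu_0$. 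I expect the only real obstacle to be the bookkeeping with the coset $\sigma(a)a\,H$ and the check that $\sigma(a)a \in N$; the remaining steps are formal.
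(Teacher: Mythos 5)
Your proposal is correct and follows essentially the same route as the paper: the paper likewise reduces the first assertion to the (well-defined) map $gH\mapsto\sigma(g)aH$, proves involutivity from $\sigma(a)a\in N_G(H)=H$, and gets uniqueness because the composite of two $\sigma$-equivariant structures is a $G$-equivariant automorphism of $G/H$, hence the identity when $H$ is self-normalizing. You merely spell out the routine verifications (well-definedness, equivariance, the computation of $\mu_0^2$) that the paper leaves implicit by citing Proposition~\ref{stableinvariants}.
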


\begin{proof}
The first assertion follows from Proposition~\ref{stableinvariants}.
Further, since $\sigma$ is an involution of $G$, 
$\sigma(a)a$ belongs to the normalizer of $H$ in $G$.
The latter coincides with $H$. This proves the second assertion.
The product of two $\sigma $-equivariant
real structures on $G/H$ is
an automorphism of $G/H$ commuting with the $G$-action.
For $H$ self-normalizing in $G$ such an
automorphism is the identity map, and the last assertion follows.
\end{proof}

\begin{theorem}\label{canonicalwonderful}
Let $H$ be a self-normalizing 
spherical subgroup of $G$ and let $X$ be the wonderful completion of $G/H$.
Then there exists one and only one $\sigma$-equivariant real structure of $X$.
\end{theorem}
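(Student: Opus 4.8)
The plan is to get existence from the uniqueness of wonderful embeddings together with Proposition~\ref{stableinvariants}, and uniqueness from the homogeneous case already settled in Theorem~\ref{canonicalstructure}; the structure of the argument parallels the proof of Proposition~\ref{flags}.

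\emph{Uniqueness.} Suppose $\mu$ and $\mu'$ are two $\sigma$-equivariant real structures on $X$. Since $\mu(g\cdot x)=\sigma(g)\cdot\mu(x)$, the homeomorphism $\mu$ permutes the $G$-orbits of $X$; as it carries the open dense orbit $X_G^\circ=G/H$ onto an orbit that is again open and dense, we get $\mu(X_G^\circ)=X_G^\circ$, and likewise for $\mu'$. Hence $\mu$ and $\mu'$ restrict to $\sigma$-equivariant real structures on $G/H$, which coincide by Theorem~\ref{canonicalstructure}. Since $X_G^\circ$ is dense in $X$ and both maps are continuous, $\mu=\mu'$ on all of $X$.

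\emph{Existence.} The idea is to transport the complex structure of $X$ by $\sigma$. Let $\overline{X}$ be the complex variety obtained from $X$ by base change along complex conjugation, with its canonical anti-holomorphic map $c\colon X\to\overline{X}$, and let $\overline{G}$ be the analogous conjugate of $G$. The anti-holomorphic involution $\sigma$ becomes an isomorphism of algebraic groups $\overline{G}\to G$, and pulling the natural $\overline{G}$-action on $\overline{X}$ back through it gives $\overline{X}$ the structure of an algebraic $G$-variety, in which an element $g\in G$ acts the way $\sigma(g)$ acted before. One checks routinely that this $G$-variety is again complete, smooth, and has an open orbit whose complement is a normal-crossings union of smooth prime divisors with orbit closures given by partial intersections, so that $\overline{X}$ is wonderful; its open orbit is $G/\sigma(H)$, which by Proposition~\ref{stableinvariants} is $G$-isomorphic to $G/H$. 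Thus $\overline{X}$ is a wonderful embedding of $G/H$, and by uniqueness of such an embedding there is a $G$-isomorphism $\psi\colon\overline{X}\to X$. Set $\mu=\psi\circ c\colon X\to X$. Then $\mu$ is anti-holomorphic, and unwinding the definition of the twisted action together with the defining property of $c$ gives $\mu(g\cdot x)=\sigma(g)\cdot\mu(x)$, so $\mu$ is $\sigma$-equivariant. Finally $\mu^2$ is a holomorphic $G$-equivariant self-map of $X$; its restriction to $G/H$ is a $G$-automorphism of $G/H$, hence an element of $N_G(H)/H$, which is trivial since $H$ is self-normalizing, so $\mu^2=\mathrm{id}$ on $G/H$ and, by density, on $X$. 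Therefore $\mu$ is a $\sigma$-equivariant real structure on $X$.

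The main obstacle is the existence step, and within it the bookkeeping of verifying that the $\sigma$-twisted $G$-variety $\overline{X}$ is genuinely a wonderful embedding of $G/H$: one must confirm that the conditions defining ``wonderful'' survive conjugating the complex structure and then re-twisting the group action by an algebraic isomorphism, and must identify the open orbit correctly as $G/\sigma(H)$, after which Proposition~\ref{stableinvariants} and the uniqueness of wonderful embeddings finish the job. An alternative to this conjugate-variety formalism would be to extend the map $\mu_0$ of Theorem~\ref{canonicalstructure} directly via Luna--Vust theory, using that the bijection of spherical invariants built in the proof of Proposition~\ref{stableinvariants} is the one induced by $gH\mapsto\sigma(g)aH$; this amounts to the same argument in different language.
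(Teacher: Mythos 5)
Your proof is correct and follows essentially the same route as the paper: existence via the complex-conjugate variety $\bar X$ with the $\sigma$-twisted $G$-action and the uniqueness of wonderful embeddings, and uniqueness by restricting to the open orbit and invoking Theorem~\ref{canonicalstructure}. The only cosmetic difference is that the paper arranges the restriction of $\mu$ to the open orbit to be the involution $\mu_0$ of Theorem~\ref{canonicalstructure} (so involutivity is immediate), whereas you identify the open orbit of $\bar X$ as $G/\sigma(H)$ via Proposition~\ref{stableinvariants} and then deduce $\mu^2=\mathrm{id}$ from $N_G(H)=H$; both are fine.
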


\begin{proof}
Let $\iota:G/H \rightarrow X$ be the given 
wonderful completion and  
let $\bar\iota: G/H \rightarrow \bar X$ 
be the corresponding anti-holomorphic map 
with $\bar X$ being the complex conjugate of $X$.
Recall that 
$\bar X=X$ as sets and 
that the sheafs of regular functions of $\bar X$ and $X$ are complex conjugate.

We endow $\bar X$ with the $G$-action 
$(g,x)\mapsto \sigma(g)\cdot x$, 
where $(g,x)\mapsto g\cdot x$ is the given action of $G$ on $X$. 
Note that this new action is regular on $\bar X$.

Consider the real structure $\mu_0$ introduced in 
Theorem ~\ref{canonicalstructure}.
Then $\bar\iota\circ\mu_0$ is again a wonderful completion of $G/H$. 
Since two wonderful completions of $G/H$ are $G$-isomorphic, 
there exists a $G$-isomorphism $\mu:X\rightarrow \bar X$ such that
$\mu\circ\iota=\bar\iota\circ\mu_0$. 
The map $\mu$ defines a $\sigma$-equivariant real structure on $X$.

Finally, a $\sigma $-equivariant real structure on $X$ is defined
by its restriction to the open $G$-orbit in $X$. The restriction
is unique by Theorem~\ref{canonicalstructure}. 
\end{proof}

In the remainder, the real structure defined in 
Theorem~\ref{canonicalwonderful} is called 
\textsl{ the canonical real structure of} $X$.
We want to give here a group-theoretical application
of Theorem~\ref{canonicalwonderful}.

\begin{theorem}\label{application}
If $H \subset G$ is a spherical subgroup with self-normalizing 
normalizer then there exists
an anti-holomorphic involution $\sigma: G \to G$,
defining the split real form and such that $\sigma (H) = H$.
Moreover, one can find a Borel subgroup
$B \subset G$, such that $B\cdot H$ is open in $G$
and $\sigma (B) = B$.
\end{theorem}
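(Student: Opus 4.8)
The plan is to deduce Theorem~\ref{application} from Theorem~\ref{canonicalwonderful} applied to the normalizer $N = N_G(H)$. First I would observe that $N$ is a spherical subgroup of $G$ (it contains $H$, so $B$ has a dense orbit on $G/N$ as well) and that by hypothesis $N$ is self-normalizing. Hence by Knop's theorem $G/N$ admits a wonderful completion $X$, and by Theorem~\ref{canonicalwonderful} there is a $\sigma$-equivariant real structure $\mu$ on $X$, where $\sigma$ is the fixed involution defining the split real form with respect to our chosen $T \subset B$. Restricting $\mu$ to the open orbit $G/N$ and transporting it through the identification of $G/N$ with a $G$-orbit, the map $\mu$ on $G/N$ is exactly the map $\mu_0 \colon gN \mapsto \sigma(g)aN$ of Theorem~\ref{canonicalstructure}, where $a \in G$ satisfies $\sigma(N) = aNa^{-1}$ (such $a$ exists by Proposition~\ref{stableinvariants}).

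The next step is to modify $\sigma$ by an inner automorphism so as to make it normalize $N$ outright. Since $\mu_0$ is involutive we have, as in the proof of Theorem~\ref{canonicalstructure}, that $\sigma(a)a \in N$. I would like to write $a = \sigma(c)c$ for some $c \in G$ and then replace $\sigma$ by $\sigma' := \mathrm{Int}(c^{-1}) \circ \sigma \circ \mathrm{Int}(c)$; a direct check gives $\sigma'(N) = c^{-1}\sigma(c\, N\, c^{-1})c = c^{-1}\sigma(c)\,a\,N\,a^{-1}\sigma(c)^{-1}c$, and using $\sigma(c)c = a$ together with $\sigma(a)a \in N$ one finds $\sigma'(N) = N$. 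The point where care is needed is the solvability of $a = \sigma(c)c$: this is a statement about the pointed set $H^1(\mathbb{Z}_2, G)$ for the $\sigma$-twisted action, namely that the class of the cocycle $a$ is trivial. For $G$ a connected reductive group and $\sigma$ an anti-holomorphic involution defining the \emph{split} real form, this cohomology set is indeed trivial (the split form has no inner twists of this kind that fail to be coboundaries after passing to $G$, by Borel--Serre / the structure of $H^1$ for split groups) — but one may instead avoid this by choosing $a$ more cleverly from the start, or by passing to the adjoint group and lifting. I expect this cocycle-triviality step to be the main obstacle and the place most in need of a clean argument or a precise citation.

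Once $\sigma'(N) = N$ is in hand, I still need $\sigma'(H) = H$, not merely $\sigma'(N) = N$. Here I would use that $H$ is characteristic in $N$: indeed, by Proposition~\ref{norm} (applied to the wonderful variety $X$, whose open orbit is $G/N$) $H$ has finite index in $N$, and in fact $H = N^\circ \cdot (\text{something})$ — more to the point, $H$ is recovered from $N$ intrinsically as the kernel of the action of $N$ on the appropriate set of $B$-eigenfunctions, equivalently $H$ is the unique spherical subgroup with $N_G(H) = N$ and with $N/H$ of the prescribed type. Since $\sigma'$ is an abstract group automorphism of $G$ carrying $N$ to $N$, it carries this canonically-defined subgroup to itself, so $\sigma'(H) = H$. (Alternatively, and more simply, one applies the whole argument directly to $H$ in place of $N$: Theorem~\ref{canonicalstructure} already furnishes $a$ with $\sigma(H) = aHa^{-1}$ and $\sigma(a)a \in N_G(H)$; if $\sigma(a)a$ can be absorbed the same conjugation trick gives $\sigma'(H) = H$ directly, which is why the hypothesis is on the normalizer of $H$ being self-normalizing — it guarantees the wonderful completion of $G/N_G(H)$ exists so that the rigidity of Theorem~\ref{canonicalwonderful} pins down $\mu_0$.)

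Finally, for the last sentence: with $\sigma'$ in hand and $B \cdot H$ open in $G$ for our original $B$ (which we may assume by choosing the base point in the open $B$-orbit), I would note that $\sigma'(B)$ is again a Borel subgroup with $\sigma'(B)\cdot H$ open in $G$; but by Corollary~\ref{borel}, $\sigma$ fixes the particular Borel $B \supset T$, and the conjugating element $c$ can be chosen inside $B$ if we arrange the open $B$-orbit appropriately, so that $\sigma' = \mathrm{Int}(c^{-1})\sigma\,\mathrm{Int}(c)$ still fixes $B$; alternatively one replaces $B$ by $\sigma'(B)$ at the end after checking both have dense orbits on $G/H$ and are conjugate by an element normalizing the relevant data. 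I would close by remarking that the hardest technical point is, as noted, the vanishing of the relevant $H^1$ allowing $a = \sigma(c)c$; everything else is bookkeeping with the identifications already set up in Theorems~\ref{canonicalstructure} and~\ref{canonicalwonderful}.
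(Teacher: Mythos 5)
Your overall strategy (conjugate $\sigma$ by an inner automorphism so that it preserves $N=N_G(H)$, then absorb into $N$ to get $\sigma(H)=H$) is the right one, but the step you yourself flag as the main obstacle is a genuine gap, and the fix you propose does not work. First, $a$ is not a cocycle in $G$: Theorem~\ref{canonicalstructure} only gives $\sigma(a)a\in N$, not $\sigma(a)a=e$, so ``the class of $a$ in $H^1(\mathbb{Z}_2,G)$'' is not even defined; the relevant object is a fixed point of the twisted involution on $G/N$, not a coboundary in $G$. Second, the claimed vanishing of $H^1(\mathbb{Z}_2,G)$ for the split real structure is false in general: already for $G=\mathrm{PGL}_2(\mathbb{C})$ with $\sigma$ defining $\mathrm{PGL}_2(\mathbb{R})$ this set has two elements (the nontrivial class giving the compact inner form), and similarly for split $\mathrm{SO}_n$. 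The paper avoids cohomology entirely: since $N$ is self-normalizing, $G/N$ has a wonderful completion $X$, which is \emph{complete}, and the canonical structure $\mu$ of Theorem~\ref{canonicalwonderful} exists on $X$; Theorem~\ref{non-emptyness and connectedness}(ii) then guarantees a $\mu$-fixed point $g_0\cdot N$ in the \emph{open} orbit. Setting $\sigma_1=i_{g_0}^{-1}\sigma i_{g_0}$ and comparing stabilizers at $e\cdot N=\mu_1(e\cdot N)$ gives $\sigma_1(N)=N$, hence $g_0^{-1}\sigma(g_0)a\in N$, and then $\sigma_1(H)=\bigl(g_0^{-1}\sigma(g_0)a\bigr)H\bigl(g_0^{-1}\sigma(g_0)a\bigr)^{-1}=H$ simply because elements of $N$ normalize $H$ — no appeal to $H$ being ``characteristic in $N$'' (which you do not justify) is needed. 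This use of completeness of $X$ is exactly the idea missing from your argument; it is also why Theorem~\ref{canonicalwonderful}, and not just Theorem~\ref{canonicalstructure}, is invoked.

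The final assertion about the Borel subgroup is also left unproved in your sketch: after replacing $\sigma$ by $\sigma_1$ there is no reason the original $B$ is $\sigma_1$-stable, the claim that ``$c$ can be chosen inside $B$'' is unsupported, and replacing $B$ by $\sigma_1(B)$ does not produce a $\sigma_1$-stable Borel. The paper's argument is different and short: with $\sigma(H)=H$ arranged, the set $\Omega\subset G/B$ of Borel subgroups $B_*$ with $B_*\cdot H$ open in $G$ is Zariski open and $\sigma$-stable, while the set of $\sigma$-fixed points of $G/B$ (for the canonical structure of Proposition~\ref{flags}) is a totally real submanifold of maximal dimension, hence not contained in any proper Zariski closed subset; therefore it meets $\Omega$, yielding a $\sigma$-stable Borel with $B\cdot H$ open.
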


\begin{proof}
Let $N$ be the normalizer of $H$ in $G$. We start with some $\sigma $ and
take $a\in G$ as in Theorem~\ref{canonicalstructure}, i.e.,
$\sigma (H) = aHa^{-1}$. Then, of course,
$\sigma (N) = aNa^{-1}$.
Let $X$ be a wonderful equivariant completion of $G/N$
and let $\mu $ be the canonical $\sigma $-equivariant real structure on $X$.
By Theorem ~\ref{non-emptyness and connectedness}
we can find a $\mu$-fixed point in the open orbit.
Let $\mu(g_0\cdot N) = g_0\cdot N$.
Replace $\sigma $ by $\sigma _1 = i^{-1}_{g_0} \sigma i_{g_0}$,
where $i_{g_0}$ is the inner automorphism of $G$ 
given by $x \mapsto g_0 x g_0^{-1}$.
Also, replace $\mu $ by $\mu_1 =g_0^{-1}\mu g_0$.
A straightforward calculation shows that
$\mu_1 (gx)=
\sigma_1 (g)\mu_1 (x)$ for all $g\in G, \, x\in X$,
i.e., $\mu_1$ is a $\sigma _1$-equivariant real structure on $X$.
Moreover, for the new pair $(\mu_1, \sigma _1)$ we have
$$\mu _1(e\cdot N) = (g_0^{-1}\mu g_0)(e\cdot N) = g_0^{-1}\mu(g_0\cdot N)
=e\cdot N.$$   
Comparing the stabilizers at $e\cdot N$ and $\mu _1(e\cdot N)$,
we get
$$\sigma _1(N) = N.$$
It follows that
$$N = \sigma _1(N) = i^{-1}_{g_0} \, \sigma \, i_{g_0}\,(N) 
= g^{-1}_0\sigma(g_0)\,\sigma(N)\,\sigma(g_0)^{-1}g_0.$$
As we have seen, $\sigma (N) = aNa^{-1}$. Substituting
this in the previous equality, we get
$g_0^{-1}\sigma (g_0)a \in N$,
and it follows that 
$\sigma _1(H) = H$.

Now, assuming $\sigma (H) = H$ consider the subset $\Omega
\subset G/B$ 
whose points correspond to the Borel subgroups $B_* \subset G$ 
with $B_*\cdot H$ open in $G$.
Then $\Omega $ is Zariski open and
$\sigma $-stable. The subset of $\sigma $-fixed Borel subgroups
is a totally real submanifold in $G/B$, having maximal possible dimension.
Thus its intersection with $\Omega $ is non-empty. 
\end{proof}

\begin{remark}
The normalizer of a spherical subgroup is in general
not self-normalizing, see Example 4 in~\cite{Av}.
In Theorem~\ref{application}, we do not know if the condition of $N$ being
self-normalizing is essential.
\end{remark}

\section{Real part: local structure and $G^\sigma _0$-orbits}
Let 
$X$ be a strict wonderful 
$G$-variety of rank $r$ equipped with the canonical real structure $\mu$.
For a complex vector space $V$
and an anti-linear map $\nu : V \to V$ we denote by the same letter $\nu $
the induced anti-holomorphic map of ${\mathbb P}(V)$.  

\begin{proposition}\label{realembedding} There exist a simple $G$-module $V$
with the associated representation $\rho: G \to {\rm GL}(V)$,
an anti-linear involutive map $\nu : V \to V$,
and an embedding $\varphi : X \to {\mathbb P}(V)$, such that
\smallbreak \noindent
$({\rm i})\ \ \nu (\rho (g)\cdot v) = \rho(\sigma (g))\cdot \nu (v)\ \ \ 
(v\in V), $
\smallbreak \noindent
$({\rm ii})\ \ \varphi (gx) = \rho (g)\cdot \varphi (x)\ \ \ (x\in X) $ and
\smallbreak
\noindent 
$({\rm iii})\ \ \varphi (\mu x) = \nu \varphi (x)\ \ \ (x\in X).$ 
\smallbreak \noindent
In particular, $\mathbb R X$ is $G^\sigma$-equivariantly embedded into 
the real projective space ${\mathbb R}{\mathbb P}(V) \subset {\mathbb P}(V)$,
defined by $\nu $.
\end{proposition}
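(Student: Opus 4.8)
The plan is to produce the triple $(\rho,\nu,\varphi)$ in three stages: realize $X$ inside the projectivization of a simple $G$-module, build the anti-linear involution $\nu$ out of the split structure of $\sigma$, and then adjust the embedding so that it intertwines $\mu$ with $\nu$. First I would invoke the structure theory of strict wonderful varieties — the classification of \cite{BCF}, which covers in particular flag varieties and the De\,Concini-Procesi compactifications — to obtain a simple $G$-module $V$, with representation $\rho\colon G\to{\rm GL}(V)$, and a $G$-equivariant closed embedding $\varphi_0\colon X\hookrightarrow{\mathbb P}(V)$. It is precisely here that strictness is used: for a general wonderful variety one only gets an embedding into the projective space of a (usually reducible) module built from the colours. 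Writing $X_G^\circ=G/H$ and replacing $\sigma$ inside its conjugacy class, I would also arrange, by (the proof of) Theorem~\ref{application}, that $\sigma(H)=H$ and that $\mu$ fixes the base point $x_0=eH$.

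For $\nu$: since $\sigma$ defines the \emph{split} real form, $G^\sigma$ is the group of real points of a split form of $G$, so the simple module $V=V_\lambda$ carries a $G^\sigma$-stable real form $V_{\mathbb R}$ — for instance Kostant's $\mathbb Z$-form of $V_\lambda$ is stable under the Chevalley generators and yields such a $V_{\mathbb R}$. Let $\nu$ be the corresponding conjugation; it is an anti-linear involution, and $\rho(g)$ commutes with $\nu$ for all $g\in G^\sigma$. The maps $g\mapsto\nu\circ\rho(g)\circ\nu$ and $g\mapsto\rho(\sigma(g))$ from $G$ to ${\rm GL}(V)$ are anti-holomorphic and coincide on $G^\sigma$, a totally real submanifold of maximal dimension in the connected complex group $G$; hence they coincide on all of $G$, which is property (i). (Alternatively one may set $\nu=A^{-1}\circ j$, where $j\colon V\to V^*$ is the conjugate-linear $K$-equivariant map attached to a $K$-invariant Hermitian metric and $A\colon V\xrightarrow{\ \sim\ }V^*$ a $G$-isomorphism provided by $V\circ\theta\cong V^*$; one verifies (i) on $K$ using $\sigma|_K=\theta|_K$, after which $\nu^2$ is a real scalar, necessarily positive because for the split form every irreducible module is of real type, and one rescales $\nu$ to an involution.)

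Finally I would make $\varphi_0$ compatible with $\mu$ and $\nu$. The map $\psi:=\nu\circ\varphi_0\circ\mu\colon X\to{\mathbb P}(V)$ is holomorphic (a composite of two anti-holomorphic maps), is a morphism (both $\mu$ and $\nu$ are algebraic real structures), is a closed embedding ($\mu$ and $\nu$ are diffeomorphisms and $X$ is complete), and satisfies $\psi(gx)=\rho(g)\psi(x)$ by a short computation from (i) and the $\sigma$-equivariance of $\mu$. Now I would use that a strict wonderful variety admits only one $G$-equivariant closed embedding into the projective space of a given simple module: such an embedding determines the $G$-line bundle $\varphi_0^*{\mathcal O}(1)$, which by multiplicity-freeness of spaces of sections on the spherical variety $X$ then determines $\varphi_0$, and these line bundles are rigid on a wonderful variety. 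Hence $\psi=\varphi_0$, i.e.\ $\nu\circ\varphi_0\circ\mu=\varphi_0$; since $\nu$ is an involution of ${\mathbb P}(V)$ this reads $\varphi_0\circ\mu=\nu\circ\varphi_0$, which is (iii), while (ii) is the $G$-equivariance of $\varphi_0$. (If one prefers to avoid the rigidity statement, the same conclusion follows from a fixed-point argument: $\varphi\mapsto\nu\circ\varphi\circ\mu$ is an involution of the family of $G$-equivariant embeddings of $X$ into ${\mathbb P}(V)$, a Zariski-open subset of a projective space carrying the compatible anti-holomorphic involution induced by $\nu$, and the real locus of that projective space is a maximal totally real submanifold, hence is not contained in the proper closed complement of the open subset, so the family has a real — i.e.\ fixed — point.) For the last sentence: the fixed locus of the anti-linear involution $\nu$ on ${\mathbb P}(V)$ is ${\mathbb R}{\mathbb P}(V)={\mathbb P}(V_{\mathbb R})$, so (iii) gives $\varphi({\mathbb R}X)\subseteq{\mathbb R}{\mathbb P}(V)$, and for $g\in G^\sigma$ the operator $\rho(g)$ preserves $V_{\mathbb R}$ by (i), so $\varphi|_{{\mathbb R}X}$ is $G^\sigma$-equivariant.

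I expect the main obstacle to be this last stage — the rigidity input that forces $\psi=\varphi_0$, equivalently the statement that the base point of the open orbit can be taken to be a real point of the embedding. Stage 1 is cited structure theory and Stage 2 is an essentially routine analytic-continuation argument, but pinning down exactly which simple module occurs and why the $G$-equivariant embedding into it is unique (or running the fixed-point argument carefully, including the parametrization of the family of embeddings) is the delicate part.
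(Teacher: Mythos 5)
Your outline coincides with the paper's: embed $X$ equivariantly into ${\mathbb P}(V)$ with $V$ simple using strictness, construct an anti-linear map $\nu$ intertwining $\rho$ and $\rho\circ\sigma$, make it involutive, and deduce (iii) from the uniqueness of the $G$-equivariant embedding applied to $\nu\circ\varphi\circ\mu$. Two remarks on where you diverge. For $\nu$, the paper does not invoke a split real form of the module: it observes via Lemma~\ref{transformedweight} that $\bar V$ twisted by $\sigma$ has the same highest weight as $V$, takes any intertwiner $\nu$, and rescales it using $\nu(v^-)=cv^-$, so that $\nu^2(v^-)=|c|^2v^-$ forces $\nu/|c|$ to be an involution; this makes the ``real type'' issue you worry about in your parenthetical construction automatic, and avoids having to align a Chevalley/Kostant real form with the given $\sigma$. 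Your analytic-continuation argument for (i) is fine, but it is extra machinery the paper doesn't need.

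The genuinely weak spot is your treatment of the uniqueness step, which the paper simply quotes from Pezzini~\cite{P} (the same reference also supplies the existence of the embedding into ${\mathbb P}(V)$ with $V$ simple for strict $X$ -- this is where the citation belongs, not~\cite{BCF}, which is the classification). Your line-bundle sketch does not close: rigidity of $\Pic(X)$ and multiplicity-freeness show at most that the embedding is determined by the linearized bundle $\varphi^*{\mathcal O}(1)$, but you give no reason why $\varphi_0$ and $\psi=\nu\circ\varphi_0\circ\mu$ pull ${\mathcal O}(1)$ back to isomorphic linearized bundles, which is the whole point. The fixed-point alternative is also not established as stated: the set of $G$-equivariant embeddings of $X$ into ${\mathbb P}(V)$ is parametrized by certain $H$-eigenlines, and you would need to show that the relevant eigenspace is $\nu$-stable (this uses $\sigma(H)=H$, so the reduction via Theorem~\ref{application} becomes essential rather than optional), that the embedding locus is Zariski open in its projectivization, and that the induced real structure there has real points -- note that an anti-holomorphic involution of a projective space can be fixed-point free, so reality must come from the anti-linear involution on the eigenspace itself. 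None of this is impossible, but as written it is a gap; citing the uniqueness statement of~\cite{P}, as the paper does, closes it immediately.
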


\begin{proof}
Since $X$ is a non-singular projective $G$-variety, 
$X$ can be $G$-equivariantly embedded into the projectivization of a $G$-module.
Let $\varphi: X\rightarrow\mathbb P(V)$ be such an embedding and 
let $\rho:G\rightarrow \mathrm{GL}(V)$ 
denote the representation associated to $V$.
Since $X$ is a strict wonderful variety, we may choose $V$ to be simple;
see ~\cite{P}.

Now, equip the complex conjugate vector space $\bar V$ 
with the $G$-module structure given by 
$g\mapsto \overline{\rho(\sigma(g))}$.
By Lemma~\ref{transformedweight}, it follows that the $G$-modules $V$ and $\bar V$ are isomorphic.
In other words, we have an anti-linear map $\nu : V \to V$ satisfying 
${(\rm i)}$.
Though $\nu $ is not necessarily involutive, we can modify $\nu $
to get this property. As in Appendix~\ref{localstructure},
let $v^-$ be a lowest weight vector
of $V$.
Then $\nu (v^-)$ is also a lowest weight vector, hence $\nu (v^-) = cv^-$
for some $c \in {\mathbb C}^*$. This implies $\nu ^2(v^-) = \nu (cv^-)=
\bar c \cdot \nu(v^-) = \vert c \vert ^2 v^-$. Replacing $\nu $ by
$\nu / {\vert c \vert}$, we get an involutive anti-linear map
satisfying $({\rm i})$.

Since $({\rm ii})$ is clear from the construction,
it remains to show $({\rm iii})$.
Note that
$\nu \circ \varphi \circ \mu$ is another $G$-equivariant embedding of $X$ into 
${\mathbb P}(V)$.
Thus $({\rm iii})$ follows from
the uniqueness of such an embedding; see ~\cite{P}.
\end{proof}

Let $Z$ be the slice defined in Appendix~\ref{localstructure}.
We show that $Z$
can be chosen to be
$\mu $-stable. As we have seen in Proposition~\ref{realembedding},
the line ${\mathbb C}\cdot v^-$ is $\nu $-stable.
So we may assume that $\nu(v^-) = v^-$. Then the tangent space 
$W:= T_{v^-}G\cdot v^-$ is also $\nu $-stable. Consider the real vector space
${\mathbb R}V = \{v\in V \ \vert \ \nu (v) = v\}$ and let
${\mathbb R}W = W\cap {\mathbb R}V$. Then ${\mathbb R}W$ is stable
under $L^\sigma $ and, also, under the Lie algebra $\mathfrak l ^\sigma$ 
of $L^\sigma $.
Now, the center of $\mathfrak l ^\sigma$ is 
contained in the center of the complexified algebra 
$\mathfrak l = \mathfrak l ^\sigma
\otimes {\mathbb C}$ and is therefore
represented by 
semisimple endomorphisms of ${\mathbb R}V$. The 
complete reducibility theorem for reductive Lie algebras over ${\mathbb R}$
implies that ${\mathbb R}W$ has a $\mathfrak l ^\sigma$-stable complement
in ${\mathbb R}V$; see~\cite{C}, Ch.IV, $\S$ 4. Call this
complement $E_{\mathbb R}$. The complexification
$E_{\mathbb R} \otimes {\mathbb C} \subset V$ is $\mathfrak l$-stable
and therefore $L$-stable. So we can take 
$E = E_{\mathbb R}\otimes {\mathbb C}$. 
Note that $E_{\mathbb R} = E \cap {\mathbb R}V$ is 
not just $\mathfrak l ^\sigma $-stable, but also $L^\sigma $-stable
even if $L^\sigma $ is disconnected.

Obviously, $\nu (E) = E$.
Furthermore, 
the linear form $\eta $ in Appendix~\ref{localstructure} can be chosen
real. Therefore, using
$({\rm iii})$ of Proposition~\ref{realembedding}, we see that $\mu (Z) = Z$.
Note 
that $P^u\cdot Z$ is $\mu$-stable and
${\mathbb R}(P^u \cdot Z)= (P^u)^\sigma \cdot {\mathbb R}Z$. 	

The first assertion of the following proposition is a real analogue
of Local Structure Theorem in~\cite{BLV}; 
see also Appendix~\ref{localstructure}.

\begin{proposition}\label{reallocalstructure}
\smallbreak\noindent
{\rm (i)}\enspace
 The natural mapping 
$$(P^u)^\sigma\times (\mathbb R Z)\rightarrow (P^u)^\sigma
\cdot {\mathbb R}Z = \mathbb R ( P^u \cdot  Z )
$$
is a $(P^u)^\sigma$-equivariant isomorphism.
\smallbreak\noindent
{\rm (ii)}\enspace
Each $G^\sigma_0$-orbit in $\mathbb R X$ contains points of the slice $Z$.
\end{proposition}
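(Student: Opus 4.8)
The plan is to deduce both statements from the complex Local Structure Theorem (Appendix~\ref{localstructure}), together with the $\sigma$-equivariance of $\mu$ and the facts, established in the discussion preceding the proposition, that $\sigma(P^u)=P^u$, that $Z$ and the open set $\Omega:=P^u\cdot Z$ are $\mu$-stable, and that $\mathbb R\Omega:=\Omega\cap\mathbb R X=(P^u)^\sigma\cdot\mathbb R Z$.

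For part (i) I would start from the $P^u$-equivariant biregular isomorphism $\alpha\colon P^u\times Z\xrightarrow{\ \sim\ }\Omega$, $(p,z)\mapsto p\cdot z$, furnished by the Local Structure Theorem, and equip $P^u\times Z$ with the anti-holomorphic involution $\tilde\mu(p,z)=(\sigma(p),\mu(z))$, which is well defined since $\sigma(P^u)=P^u$ and $\mu(Z)=Z$. Applying the $\sigma$-equivariance of $\mu$ with $g=p\in P^u$ and $x=z\in Z$ gives $\mu(\alpha(p,z))=\sigma(p)\cdot\mu(z)=\alpha(\tilde\mu(p,z))$, so $\alpha$ intertwines $\tilde\mu$ with $\mu|_\Omega$. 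A biregular isomorphism intertwining two anti-holomorphic involutions restricts to an isomorphism of their fixed-point loci; since $\tilde\mu$ is a product involution its fixed locus is $(P^u)^\sigma\times Z^\mu=(P^u)^\sigma\times\mathbb R Z$, while that of $\mu|_\Omega$ is $\mathbb R\Omega$. Thus $\alpha$ restricts to the desired $(P^u)^\sigma$-equivariant isomorphism $(P^u)^\sigma\times\mathbb R Z\xrightarrow{\ \sim\ }\mathbb R\Omega=(P^u)^\sigma\cdot\mathbb R Z$, which also re-derives the last identity.

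For part (ii), let $C=G^\sigma_0\cdot x$ be any $G^\sigma_0$-orbit in $\mathbb R X$ and put $\mathcal O=G\cdot x$. By the argument in the proof of Theorem~\ref{non-emptyness and connectedness}(ii) the orbit $\mathcal O$ is $\mu$-stable, so Lemma~\ref{dimension} applied to $\mathcal O=G/G_x$ shows that $C$ is the connected component of $\mathcal O\cap\mathbb R X$ through $x$ and is Zariski dense in $\mathcal O$. By the Local Structure Theorem every $G$-orbit of $X$ meets $Z$, so $\mathcal O\cap\Omega$ is a non-empty Zariski-open subset of $\mathcal O$; being Zariski dense in $\mathcal O$, the set $C$ meets it. Choosing $x'\in C\cap\Omega$ we get $x'\in\Omega\cap\mathbb R X=(P^u)^\sigma\cdot\mathbb R Z$ by part (i), so $x'=p\cdot z$ with $p\in(P^u)^\sigma$ and $z\in\mathbb R Z\subseteq Z$. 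Since $P^u$ is a connected unipotent group, $(P^u)^\sigma$ is connected and hence contained in $G^\sigma_0$; therefore $z=p^{-1}\cdot x'\in G^\sigma_0\cdot x=C$, which shows $C\cap Z\ne\emptyset$.

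The crux is part (ii): the genuine input there is the combination of Lemma~\ref{dimension} (Zariski density of a $G^\sigma_0$-orbit in its $G$-orbit) with the complex Local Structure Theorem, plus the observation that one may translate a point of $\mathbb R\Omega$ into $Z$ without leaving its $G^\sigma_0$-orbit because $(P^u)^\sigma\subseteq G^\sigma_0$. Beyond this, the only points requiring care are the stability claims $\sigma(P^u)=P^u$ and $\mu(Z)=Z$ supplied by the discussion before the proposition, and the (routine) passage to fixed-point loci in part (i); I anticipate no essential difficulty.
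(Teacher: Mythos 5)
Your proof is correct and follows essentially the same route as the paper: part (i) is just a spelled-out version of the paper's appeal to the Local Structure Theorem together with the $\mu$-stable construction of $Z$, and part (ii) combines the Zariski density of the $G^\sigma_0$-orbit in its $G$-orbit (Lemma~\ref{dimension}) with part (i) and the inclusion $(P^u)^\sigma\subseteq G^\sigma_0$, exactly as the paper does. The only cosmetic difference is that you get $G\cdot x\cap P^u\cdot Z\neq\emptyset$ from the fact that $Z$ meets every $G$-orbit, whereas the paper phrases it as $G\cdot x$ not being contained in the union of the colors; both are the same consequence of the Local Structure Theorem.
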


\begin{proof}
The first assertion follows readily from Local Structure Theorem 
and the above construction of $Z$.

To prove ${\rm (ii)}$, take a point $x \in {\mathbb R}X$.
Since $X$ is wonderful, the orbit $G\cdot x$
is not contained in a prime divisor $D \in {\mathcal D} (X)$.
The intersection $G\cdot x \cap \bigl ( \cup_{{\mathcal D}(X)} D \bigr )$
is a proper Zariski closed subset in $G\cdot x$. By the last assertion
of Lemma~\ref{dimension}, this subset does not contain $G_0^\sigma \cdot x$.
Thus
$G_0^\sigma\cdot x\cap X\setminus \cup_{\mathcal D (X)} D\neq \emptyset$, 
and (ii) follows from ${\rm (i)}$.
\end{proof}

In the remainder,  
$x$ denotes a real point in $X^\circ_G\cap Z$ and $H\subset G$ 
is the stabilizer of $x$. We assume that $\sigma (B) = B$
and $\mu (Z) = Z$. It follows that $\sigma(H)=H$.
Note also that $B\cdot H$ is open in $G$ because the orbit $B\cdot x$ is open
in $X$.

As we recall in Appendix~\ref{localstructure}, 
$T$ acts linearly on $Z$ and the
corresponding characters, say $\gamma_1 ,\ldots,\gamma_r$,
are linearly independent.
These characters are usually called \textsl{spherical roots of $X$}.
Further, we have 
$$
T\cap H=\bigcap_i\,\ker\gamma_i .
$$
Set 
$$
A=T/T\cap H
$$
and let $_2 A \subset A$ be the subgroup of elements of order at most $2$.
Note that any element $t\in T$ can be uniquely written as 
$$
t=t_0t_1, \quad\mbox{ where $t_0 \in T^\sigma_0$ and $\sigma(t_1)=t_1^{-1}$}.
$$
Such a decomposition of $t$ will be referred to 
as the decomposition of $t$ with respect to $\sigma$.

\begin{proposition}\label{auxiliar2}
The $T^{\sigma}_0$-orbits of $\mathbb R Z\cap X^\circ_G$ are in 
one-to-one correspondence with the elements of $_2 A$.
In particular, the number of such orbits does not exceed $2^r$.
\end{proposition}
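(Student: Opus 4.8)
The plan is to transport the whole question to the torus $A=T/(T\cap H)$ and its real form. As recalled in Appendix~\ref{localstructure}, the open $G$-orbit meets the slice in the open $T$-orbit, so $Z\cap X^\circ_G=T\cdot x$, and since the stabilizer of $x$ in $T$ is $T\cap H$ one obtains an identification $Z\cap X^\circ_G\cong A$ via $t\cdot x\leftrightarrow t(T\cap H)$. Because $x\in\mathbb R X$ and $\mu$ is $\sigma$-equivariant, $\mu(t\cdot x)=\sigma(t)\cdot x$; hence under this identification $\mu$ corresponds to the anti-holomorphic involution $\sigma_A$ of $A$ induced by $\sigma$ (well defined, as $\sigma(T)=T$ and $\sigma(H)=H$), while $T^\sigma_0$ acts on $A$ by translations through its image under the projection $p\colon T\to A$. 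So it remains to count the $T^\sigma_0$-orbits on $A^{\sigma_A}$.

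First I would make $A$ concrete. Linear independence of the spherical roots $\gamma_1,\dots,\gamma_r$ is precisely the statement that $(\gamma_1,\dots,\gamma_r)\colon T\to(\mathbb C^*)^r$ is surjective; its kernel is $\bigcap_i\ker\gamma_i=T\cap H$, so it identifies $A$ with $(\mathbb C^*)^r$. By Lemma~\ref{transformedweight}, $\overline{\gamma_i\circ\sigma}=\gamma_i$, so in these coordinates $\sigma_A$ is coordinatewise complex conjugation. Consequently $A^{\sigma_A}\cong(\mathbb R^*)^r$ has torsion-free identity component $A^{\sigma_A}_0\cong(\mathbb R_{>0})^r$; the maximal compact subgroup $A_c:=\{b\in A:\sigma_A(b)=b^{-1}\}$ satisfies $A_c\cap A^{\sigma_A}_0=\{e\}$; the $2$-torsion subgroup ${}_2A\cong\{\pm1\}^r$ lies in $A_c$ and has order $2^r$; and $p(T^\sigma_0)$, being connected, lies in $A^{\sigma_A}_0$.

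The core is then a short computation with the decomposition $t=t_0t_1$ with respect to $\sigma$. For $t\in T$ one has $t^{-1}\sigma(t)=t_1^{-2}$, so the point $t\cdot x$ lies in $\mathbb R Z$ exactly when $t_1^2\in T\cap H$, i.e.\ $p(t_1)\in{}_2A$; and then the $T^\sigma_0$-orbit of $t\cdot x=t_0t_1\cdot x$ equals $T^\sigma_0\cdot t_1\cdot x$. I would show that the assignment sending this orbit to $p(t_1)$ is a well-defined bijection onto ${}_2A$. It is well defined: if $T^\sigma_0 t_1 x=T^\sigma_0 t_1'x$ then $t_1t_1'^{-1}\in T^\sigma_0\cdot(T\cap H)$, so $p(t_1)p(t_1')^{-1}$ is a $2$-torsion element of the torsion-free group $A^{\sigma_A}_0$, hence equals $e$. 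It is injective, since $p(t_1)=p(t_1')$ forces $t_1\cdot x=t_1'\cdot x$, hence the same orbit. It is surjective: given $a\in{}_2A$, pick any $t'\in T$ with $p(t')=a$ and write $t'=t_0't_1'$; then $p(t_0')\in A^{\sigma_A}_0$ and $p(t_1')\in A_c$, so $p(t_0')=a\,p(t_1')^{-1}\in A_c\cap A^{\sigma_A}_0=\{e\}$, whence $p(t_1')=a$, $t_1'^2\in T\cap H$, and $t_1'\cdot x$ is a point of $\mathbb R Z\cap X^\circ_G$ whose orbit maps to $a$. Since $|{}_2A|=2^r$, the proposition follows, the count being in fact exactly $2^r$.

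I expect surjectivity to be the only delicate step: a $2$-torsion element of $A$ need not lift to a $2$-torsion element of $T$, so one genuinely needs the $\sigma$-decomposition together with $A_c\cap A^{\sigma_A}_0=\{e\}$ rather than a naive lift. The remaining ingredients—the shape of $Z\cap X^\circ_G$ and the action of $\mu$ on it—are immediate from the Local Structure Theorem recalled in Appendix~\ref{localstructure}, the $\sigma$-equivariance of $\mu$, and $\mu(Z)=Z$.
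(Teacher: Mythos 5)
Your proof is correct and follows essentially the same route as the paper: the same decomposition $t=t_0t_1$ with respect to $\sigma$, the same assignment sending the $T^\sigma_0$-orbit of $t\cdot x$ to the class of $t_1$ in $_2A$, and the same key inputs (Lemma~\ref{transformedweight} and $T\cap H=\bigcap_i\ker\gamma_i$). The only difference is presentational: you transport the computation to $A\cong(\mathbb C^*)^r$ and argue via torsion-freeness of $A^{\sigma_A}_0$ and $A_c\cap A^{\sigma_A}_0=\{e\}$, whereas the paper evaluates the spherical roots directly ($\gamma_i(t)$ real, $\gamma_i(t_0)>0$, $\gamma_i(t_1)=\pm1$), and you spell out the well-definedness and surjectivity details the paper leaves implicit.
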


\begin{proof}
Let $t\in T$ and let $y=t\cdot x$ be a real point.
Then $(\gamma_i\circ\sigma)(t)=\gamma_i(t)$ 
for every spherical root $\gamma_i$ of $X$.
By Lemma~\ref{transformedweight}, it follows that $\gamma_i(t)$ is real-valued.
If $t=t_0t_1$ is
the decomposition of $t$ with respect to $\sigma $,
then we have $\gamma_i(t_1)=\pm 1$. Therefore $t_1 ^2 \in H$.
Assigning to $y\in {\mathbb R} Z \cap X^\circ _G$ the image of $t_1$
in $A$, we get a correctly defined map from the set of $T^\sigma _0$-orbits
on ${\mathbb R} Z\cap X^\circ _G$ to $_2 A$:
$$\alpha :
T^\sigma_0 \setminus ({\mathbb R} Z \cap X^\circ _G) \ \rightarrow \ _2 A .$$
The injectivity of $\alpha $ is obvious. To prove the surjectivity, take
any $t\in T$, such that $t ^2 \in H$.
Then $\gamma _i(t ^2) = 1$, hence $\gamma _i(t) =\pm 1$. It follows
that $t\cdot x$ is a real point. Furthermore,
$\gamma _i(t_0) = 1$ and $\gamma _i(t_1) = \gamma _i (t)$.
Hence $t\cdot H = t_1\cdot H$ and $\alpha (T^\sigma _0\cdot x)
 =t$ mod $T\cap H$. 
\end{proof}

Let $I$ denote a subset of $\{1,\ldots,r\}$ and let $O_I \subset X$
be the corresponding $G$-orbit. Recall that $O_I$ is $\mu$-stable.

\begin{theorem}\label{estimationrealorbits}
\smallbreak\noindent
{\rm (i)}\enspace
Each $G^\sigma_0$-orbit in $\mathbb R O_I$ 
intersects the slice $Z$  in a finite number of $T^\sigma_0$-orbits.
The number of $T^\sigma _0$-orbits in ${\mathbb R}Z \cap O_I$ does not
exceed $2^{r-\left|I\right|}$.
\smallbreak\noindent
{\rm (ii)}\enspace
${\mathbb R} O_I$ contains at most $2^{r-\left|I\right|}$ $G^\sigma$-orbits.
\smallbreak\noindent
{\rm (iii)}\enspace
The total number of $G^\sigma_0$-orbits
in $\mathbb R X$ is smaller than or equal to 
$$
\sum_{k=0}^r 2^k \binom{r}{k}.
$$
\end{theorem}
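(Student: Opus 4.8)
The plan is to reduce the global count to the local, orbit-by-orbit estimates already available, using Proposition~\ref{reallocalstructure} and Proposition~\ref{auxiliar2} as the engine. First I would prove (i): fix the subset $I$ and the $G$-orbit $O_I$; note that $O_I$ is itself wonderful (it is one of the partial intersections of the boundary divisors, hence a wonderful variety under $G$, or rather under a quotient), of rank $r-|I|$, and that the slice $Z$ for $X$ restricts to a slice for $O_I$ compatible with $T$ and with $\mu$. The spherical roots of $O_I$ are the $\gamma_i$ with $i\notin I$, so they number $r-|I|$. Applying the real Local Structure Theorem (Proposition~\ref{reallocalstructure}(ii)) inside $O_I$, every $G^\sigma_0$-orbit in $\mathbb R O_I$ meets $\mathbb R Z\cap O_I$, and then Proposition~\ref{auxiliar2} applied to $O_I$ in place of $X$ shows $\mathbb R Z\cap O_I$ breaks into at most $2^{\,r-|I|}$ $T^\sigma_0$-orbits. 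Each $G^\sigma_0$-orbit meets this set, so it meets $Z$ in finitely many $T^\sigma_0$-orbits, and the count $2^{\,r-|I|}$ follows.

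Next I would deduce (ii). Each $G^\sigma_0$-orbit in $\mathbb R O_I$ meets $\mathbb R Z\cap O_I$; since distinct $G^\sigma$-orbits are disjoint and each is a union of $G^\sigma_0$-orbits, the number of $G^\sigma$-orbits in $\mathbb R O_I$ is at most the number of $G^\sigma_0$-orbits meeting $Z$, which is at most the number of $T^\sigma_0$-orbits in $\mathbb R Z\cap O_I$. (One has to be mildly careful: different $T^\sigma_0$-orbits in the slice can lie in the same $G^\sigma_0$-orbit, which only helps the inequality, and no $T^\sigma_0$-orbit can straddle two $G^\sigma_0$-orbits.) Hence $\mathbb R O_I$ has at most $2^{\,r-|I|}$ $G^\sigma$-orbits — indeed at most that many $G^\sigma_0$-orbits, which is what is needed for (iii).

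Finally (iii) is a bookkeeping step: $X$ is the disjoint union of its $G$-orbits $O_I$ over all subsets $I\subseteq\{1,\dots,r\}$, and by Theorem~\ref{non-emptyness and connectedness}(ii) each $O_I$ meets $\mathbb R X$, so $\mathbb R X=\bigsqcup_I \mathbb R O_I$. By (i)–(ii) each $\mathbb R O_I$ contributes at most $2^{\,r-|I|}$ $G^\sigma_0$-orbits. Summing over $I$ and grouping by $k=r-|I|$, there are $\binom{r}{r-k}=\binom{r}{k}$ subsets $I$ with $|I|=r-k$, giving the bound $\sum_{k=0}^r 2^k\binom{r}{k}$.

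I expect the main obstacle to be the rigorous identification of $O_I$ as a wonderful variety to which Propositions~\ref{reallocalstructure} and~\ref{auxiliar2} genuinely apply, together with checking that the slice $Z$, the torus $T$, the Borel $B$, and the real structure $\mu$ all restrict compatibly to $O_I$ with the correct rank $r-|I|$ and correct spherical roots $\{\gamma_i : i\notin I\}$. This is essentially a matter of invoking the standard structure theory of wonderful varieties (the boundary divisors $X_i$ give the vanishing of the $\gamma_i$, and $O_I$ is the wonderful variety with those spherical roots removed) and observing that $Z$ was constructed $\mu$-stably and $T$-stably; but it needs to be stated carefully, and one should note that $O_I$ may only be strict wonderful under a quotient group, which is harmless since Proposition~\ref{auxiliar2} is purely about the torus action on the slice. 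Everything else is formal.
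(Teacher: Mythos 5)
Your proposal is correct and follows essentially the same route as the paper: the paper likewise observes that the orbit closures $\mathrm{cl}(O_I)$ are strict wonderful of rank $r-|I|$ (citing Luna), applies Proposition~\ref{reallocalstructure}(ii) and Proposition~\ref{auxiliar2} to them to get (i), deduces (ii), and obtains (iii) by summing over all $G$-orbits. Your extra care about the compatibility of $Z$, $T$, $B$, $\mu$ with the orbit closure, and the remark that (i) actually bounds the number of $G^\sigma_0$-orbits per $\mathbb{R}O_I$ (which is what (iii) needs), just makes explicit what the paper leaves implicit.
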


\begin{proof}
Recall  
that the $G$-orbit closures in $X$ are also strict wonderful $G$-varieties.
Furthermore, 
the rank of the orbit closure ${\rm cl}(O_I)$ equals $r-\left|I\right|$;
see Sect. 3.2 in~\cite{Lu01}.
Thus (i) follows from (ii) of Proposition~\ref{reallocalstructure}, along with the
estimate in Proposition~\ref{auxiliar2}. From (i) we get (ii),
and (iii) is obtained by summing up over all $G$-orbits.
\end{proof}

\begin{example}\smallbreak
\enspace
Let $(\mathbb P^{n})^*$ denote the variety of 
hyperplanes of $\mathbb C^{n+1}$ and 
let $X=\mathbb P^n\times(\mathbb P^{n})^*$ 
be acted on diagonally by $G=PGL_{n+1}(\mathbb C)$.
Suppose $n>1$. Then $X$ is a 
strict wonderful variety of rank $1$. 
The canonical real structure $\mu$ 
is defined by the complex conjugation on each factor of $X$.
Moreover, $G^\sigma_0=G^\sigma=PGL_{n+1}(\mathbb R)$ 
acts on $\mathbb R X$ with two orbits.
\end{example}
\begin{example}
\smallbreak
\enspace
Consider the quadratic form 
$$F(z)=z_1^2+\ldots + z_p^2 -z_{p+1}^2-\ldots-z_{p+q}^2,\ \ 
q\ge p > 0,\ \ p+q >2.$$
The corresponding orthogonal group $G=SO_F$ acts 
on $X= \mathbb P^{m}$ as a subgroup of ${\rm SL}_{m+1}({\mathbb C})$,
where $m = p+q-1$.
Under this action, 
$X$ is a two-orbit $G$-variety. 
The closed $G$-orbit is given by the equation $F=0$.
Again, $X$ is a strict wonderful variety of rank $1$.
Let $\mu:X\rightarrow X$ and 
$\sigma:G\rightarrow G$ be the involutive
mappings defined by complex conjugation.
Then $\mu$ is a $\sigma$-equivariant real structure on $X$.
Note that $\sigma $ defines a split real form of $G$ only for $q=p$
or $q=p+1$.
The real part 
$\mathbb R X$ 
is the real projective space ${\mathbb R \mathbb P}^m $,
on which $G^\sigma_0$ acts with three orbits: $F>0$, $F<0$ and $F=0$. 
\end{example}

\begin{remark}
Starting with a real semisimple symmetric space, 
A. Borel and L. Ji
considered the wonderful completion of the complexified
homogeneous space.  
In this special setting,
the completion is defined over ${\mathbb R}$ in a natural way.  
For the description of real group orbits
on the set of its real points
see ~\cite{BJ}, chapters 5\,-\,7.

\end{remark}

\appendix

\section{Spherical varieties: invariants and local structure}

\subsection{Luna-Vust invariants of spherical homogeneous 
spaces}\label{LV-invariants}

We recollect the definition of the combinatorial 
invariants attached to a given spherical $G$-variety $X$; see ~\cite{LV}.

Let $\mathbb C(X)$ denote the function field of $X$. Then the natural left action of 
$G$ on $X$ yields a $G$-module structure on $\mathbb C(X)$.
\textsl{The weight lattice} $\Lambda^+(X)$ 
is the set of $B$-weights of the $B$-eigenfunctions of $\mathbb C(X)$.
Since $X$ is spherical, 
the $\chi$-weight space of 
$\mathbb C(X)$ is of dimension $1$ for every $\chi\in \Lambda^+(X)$.

Let $\mathcal V(X)$ be the set of $G$-invariant discrete $\mathbb Q$-valued valuations of $\mathbb C(X)$. 
Consider the mapping
$$
\rho: \mathcal V(X)\rightarrow \mathrm{Hom}(\Lambda^+(X),\mathbb Q), \quad v\mapsto (\chi\mapsto v(f_\chi)).
$$
where $f_\chi$ is a $B$-eigenfunction of $\mathbb C(X)$ of weight $\chi$.
The map $\rho$ is injective,
hence one may regard $\mathcal V(X)$ in $\mathrm{Hom}(\Lambda^+(X),\mathbb Q)$.
Further, this cone is convex and simplicial.
The cone $\mathcal V(X)$ is called \textsl{the valuation cone of} $X$; 
see for instance~\cite{Br97}.

Define \textsl{the set of colors} $\mathcal D(X)$ of $X$ 
as the set of $B$-stable, but not $G$-stable prime divisors of $X$. 
This is a finite set equipped with 
two maps, namely, $D\mapsto \rho(v_D)$ and $D\mapsto G_D$ with
$v_D$ (resp. $G_D$) 
being the valuation defined by (resp. the stabilizer in $G$ of) the color $D$.

\textsl{The Luna-Vust invariants of} $X$ 
are given by the triple $\Lambda^+(X)$,  $\mathcal V(X)$, $\mathcal D(X)$.
For two spherical $G$-varieties $X$ and $X'$, 
the equality $\mathcal D(X)=\mathcal D(X')$ means 
that there exists a bijection $\iota:\mathcal D(X)\rightarrow\mathcal D(X')$,
such that $G_D=G_{\iota(D)}$ and $\rho(v_D)=\rho(v_{\iota(D)})$.

\begin{theorem}[\cite{Lo}]
Let $H$ and $H'$ be spherical subgroups of $G$. 
If $H$ and $H'$ have the same Luna-Vust invariants then they are $G$-conjugate.
\end{theorem}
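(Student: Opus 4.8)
The plan is to follow the strategy of Losev \cite{Lo}, in three stages: reduce to a self-normalizing spherical subgroup, reconstruct its wonderful embedding from the invariants, and then read off conjugacy.

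First I would carry out the standard reductions. Since the Luna--Vust invariants and $G$-conjugacy are insensitive to passing to an isogenous group and to splitting off the connected center, one may assume that $G$ is semisimple and simply connected. Next, replace $H$ by its \emph{spherical closure} $\overline H$, the self-normalizing subgroup with $H\subseteq\overline H\subseteq N_G(H)$ obtained from $H$ by adjoining the kernel of the action of $N_G(H)$ on the set of colors $\mathcal D(G/H)$. The invariants of $G/\overline H$ are computed from those of $G/H$ by an explicit combinatorial recipe: the weight lattice $\Lambda^+(G/\overline H)$ is the sublattice of $\Lambda^+(G/H)$ generated by the spherical roots (which are themselves read off from $\mathcal V(G/H)$), the valuation cone is the induced quotient cone, and the colors correspond. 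Hence $G/\overline H$ and $G/\overline{H'}$ have the same invariants, and, granting the theorem for spherical closures, we may assume after conjugation that $\overline H=\overline{H'}$. Then $H$ and $H'$ are subgroups of this fixed group $\overline H$ with spherical closure $\overline H$; since $\overline H/H$ is abelian (the group $N_G(H)/H$ being diagonalizable by a theorem of Brion, see \cite{Br97}), such a subgroup is recovered from the finite-index overlattice $\Lambda^+(G/-)\supseteq\Lambda^+(G/\overline H)$, and as these overlattices coincide we conclude $H=H'$. It therefore suffices to treat the case $H=\overline H$ self-normalizing, where $G/H$ admits a wonderful embedding $X$ by Knop's theorem \cite{K}.

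The heart of the argument is to show that $X$, and hence its open orbit $G/H$, is determined up to $G$-isomorphism by the combinatorial data (for self-normalizing $H$, the Luna--Vust invariants and the spherical system carry the same information). The convenient vehicle is a finitely generated, normal, multiplicity-free $G$-algebra canonically attached to $X$: the Cox ring $R(X)=\bigoplus H^0(X,\mathcal O_X(D))$, graded by the effective divisor classes --- a combinatorially determined monoid generated by the boundary divisors $X_i$ and the colors --- from which $X$ is recovered as a GIT quotient by the N\'eron--Severi torus; when $G/H$ happens to be quasi-affine one may use $\mathbb C[G/H]$ instead. Writing this algebra as $\bigoplus_\lambda A_{(\lambda)}$ with $A_{(\lambda)}\cong V_\lambda^*$ the isotypic summand of $B$-highest weight $\lambda$ (multiplicity-freeness makes this a monoid grading, and the grading monoid is combinatorial), and fixing a $B$-highest weight vector $f_\lambda\in A_{(\lambda)}$ for each generator $\lambda$ of the monoid, the entire multiplication is encoded by the relations $f_\lambda f_\mu=c_{\lambda,\mu}\,f_{\lambda+\mu}+(\text{terms of strictly smaller }B\text{-weight})$ together with the $\mathfrak g$-action, the only remaining freedom being a rescaling of the finitely many $f_\lambda$. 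One then shows, by induction on the rank $r$, that after such rescaling every structure constant is forced: the rank-$0$ case amounts to the coordinate ring of a torsor; in the inductive step one localizes at subsets of the spherical roots --- geometrically, passing to $G$-orbit closures in $X$, again wonderful of smaller rank with combinatorially determined invariants (see \cite{Lu01}) --- and at subsets of colors, which pins down the structure constants ``away from the diagonal,'' and one controls the rest through the geometry of the cotangent bundle of $G/H$ and Knop's little Weyl group, which are likewise combinatorial; the finitely many low-rank (``primitive'') spherical systems are then checked by hand. Once $R(X)\cong R(X')$ has been established as graded $G$-algebras, the associated GIT quotients give a $G$-isomorphism $X\cong X'$ carrying the open orbit of $G/H$ onto that of $G/H'$; comparing the stabilizers of corresponding base points yields $gHg^{-1}=H'$ for some $g\in G$, and undoing the reductions proves the theorem.

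I expect the two outer stages --- the reductions and the passage from an algebra isomorphism back to a conjugacy of subgroups --- to be essentially formal, and the genuine obstacle to be the middle one: the assertion that the combinatorial invariants suffice to rebuild the $G$-algebra $R(X)$, equivalently the wonderful variety $X$, with no isomorphism assumed beforehand. This is in effect the uniqueness half of Luna's program, and the delicate point is the simultaneous non-vanishing and normalization of the higher structure constants $c_{\lambda,\mu}$ and of their lower-weight corrections; it is precisely here that one must bring in the finer machinery --- Knop's invariant collective motion and little Weyl group, the local structure theorem, and the rank induction with its explicit low-rank base cases --- and where essentially all the work of \cite{Lo} is concentrated.
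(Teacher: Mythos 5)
This statement is not proved in the paper at all: it is quoted, with attribution, from Losev \cite{Lo}, and is used as a black box in the proof of Proposition~\ref{stableinvariants}. So the only meaningful comparison is with Losev's own argument. Your outline does reproduce the broad architecture of that argument (reduction to the spherically closed/self-normalizing case, reconstruction of a multiplicity-free $G$-algebra attached to $G/H$ or its wonderful embedding, rank induction using orbit closures, Knop's little Weyl group and the cotangent-bundle geometry), but as a proof it has a genuine gap, and you name it yourself: the entire content of the theorem is the assertion that, after rescaling the finitely many highest-weight generators, all structure constants of the multiplicity-free algebra are forced by the Luna--Vust invariants. In your text this is carried by ``one then shows'' and a list of tools; no argument is given for the non-vanishing and normalization of the constants $c_{\lambda,\mu}$ and their lower-weight corrections, nor for why the localization/induction actually closes. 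Since that is where all of \cite{Lo} is concentrated, the proposal is a roadmap of a known proof rather than a proof.

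There are also gaps in the part you treat as ``essentially formal.'' The step from ``$H$ and $H'$ have the same spherical closure $\overline H$ and the same weight lattice'' to ``$H=H'$'' is itself a nontrivial theorem: the lattice--subgroup correspondence you invoke presupposes a fixed ambient homogeneous space (via Knop's faithful action of $N_G(H)/H$ on the $B$-semiinvariants of $\mathbb C(G/H)$), whereas here $H$ and $H'$ are a priori unrelated normal subgroups of $\overline H$, so one cannot simply match ``overlattices'' of $\Lambda^+(G/\overline H)$ without further argument. Likewise, the recipe you state for the invariants of $G/\overline H$ in terms of those of $G/H$ (lattice generated by the spherical roots, etc.) glosses over the possible change of spherical roots (e.g.\ replacement of $\gamma$ by $2\gamma$) under spherical closure, which is exactly the kind of bookkeeping that must be done carefully for the reduction to be legitimate. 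In short: the skeleton is consistent with Losev's strategy, but both the central reconstruction step and the reduction to the self-normalizing case are asserted rather than proved.
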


\subsection{Local structure}\label{localstructure}
 
We recall the so-called Local Structure Theorem with special emphasis
on the case of wonderful varieties.

Let $G$ denote a connected reductive algebraic group. 
Fix a Borel subgroup $B$ of $G$ and a maximal torus $T\subset B$ of $G$.

First, consider any normal and irreducible $G$-variety $X$ 
and let $Y$ be a complete $G$-orbit of $X$.
Let $y\in X$ be fixed 
by the Borel subgroup $B^-$ of $G$ opposite to $B$ and containing $T$.
Let $P$ denote the parabolic subgroup of $G$ opposite to the stabilizer 
$G_y$ and containing $T$.
Then $L=P\cap G_y$ is a Levi subgroup of $P$, so that $P = L\cdot P^u$,
where $P^u$ is the unipotent radical of $P$. 
Theorem 1.4 in~\cite{BLV} asserts that 
there exists an affine $L$-variety $Z$,
such that $y\in Z$ and the natural map 
$P^u\times Z\rightarrow P^u \cdot Z$ is an isomorphism.

Suppose now that $X$ is spherical and denote the set of colors of 
$X$ by $\mathcal D(X)$; see Appendix~\ref{LV-invariants}.
Further,  
if $Y$ is the unique closed $G$-orbit in $X$
then $P^u Z$ is the affine set 
$ X\setminus \cup_{\mathcal D(X)} D$ and $Z$ is a spherical $L$-variety. 
In the case of wonderful varieties
Theorem 1.4 
in~\cite{BLV} can be formulated as follows; see ~\cite{Lu96}, Sect. 1.1, 1.2,
and ~\cite{Br97}, Sect.\,2.2\,-\,2.4.

\begin{theorem}
Assume $X$ is a wonderful $G$-variety.
There exists an affine $L$-subvariety $Z$ of $X$ containing $y$ such that
\smallbreak
\noindent{\rm(i)}\enspace
$P^u\times Z\rightarrow X\setminus \cup_{\mathcal D(X)} D: (p,z)\mapsto p.z$ is an isomorphism.
\smallbreak
\noindent{\rm(ii)}\enspace
The derived group of $L$ acts trivially on $Z$ and $Z$ intersects each $G$-orbit of $X$ in one single $T$-orbit.
\smallbreak\noindent
{\rm (iii)} The 
variety $Z$ is the affine space 
of dimension equal to the rank $r$ of $X$ 
Moreover, $Z$ is acted on linearly by 
$T$ and the corresponding $r$ characters of $T$
are linearly independent.
\end{theorem}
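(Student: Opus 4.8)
The plan is to derive the refined statement from the general Local Structure Theorem (Theorem~1.4 of \cite{BLV}, recalled just above) by exploiting smoothness of $X$ and the orbit combinatorics of a wonderful variety. First I would apply that theorem at a point $y\in X$ fixed by $B^-$: with $G_y$ its stabiliser (a parabolic containing $B^-$), $P$ the opposite parabolic containing $T$, and $L=P\cap G_y$ the common Levi, one obtains an affine spherical $L$-variety $Z$ with $y\in Z$ such that $P^u\times Z\to P^u\cdot Z$ is an isomorphism and, $Y$ being the unique closed $G$-orbit, $P^u\cdot Z=X\setminus\bigcup_{\mathcal D(X)}D$. This already yields~(i). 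Moreover $Z$ is normal and irreducible; it is \emph{smooth}, being a factor of the smooth open set $X\setminus\bigcup D$; and $y$ is an $L$-fixed point of $Z$, since $L\subset G_y$. What remains is to show that the derived group of $L$ acts trivially on $Z$, that $Z\cong\mathbb A^r$ with $T$ acting linearly through $r$ linearly independent characters, and that $Z$ meets every $G$-orbit in a single $T$-orbit.

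Second I would analyse the tangent space $T_yZ$. Since $Y=G/G_y$ has $P^u\cdot y$ as a dense orbit and $P^u\times Z\cong P^u\cdot Z$, a dimension count identifies $T_yZ$ with the normal space $T_yX/T_yY$ of the closed orbit at $y$; and because $Y=X_1\cap\dots\cap X_r$ is a transverse (normal crossing) intersection of the $L$-stable smooth divisors $X_i$, this normal space splits $L$-equivariantly as $T_yZ=\bigoplus_{i=1}^r N_i$, where $N_i$ is the one-dimensional normal line to $X_i$ at $y$, on which $L$ acts through a character $\gamma_i$. In particular $(L,L)$ acts trivially on $T_yZ=\mathfrak m/\mathfrak m^2$, with $\mathfrak m$ the maximal ideal of $y$ in $\mathbb C[Z]$; hence, $(L,L)$ being semisimple, it acts trivially on each $\mathfrak m^k/\mathfrak m^{k+1}$ (a quotient of $\operatorname{Sym}^k(\mathfrak m/\mathfrak m^2)$), therefore on the completion $\widehat{\mathcal O}_{Z,y}$, therefore on $\mathbb C[Z]\subset\mathcal O_{Z,y}$. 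Thus $(L,L)$ acts trivially on $Z$, and the quotient torus $A=L/(L,L)$ --- onto which $T$ maps surjectively --- acts on $Z$ with a dense orbit (the image of the dense $(B\cap L)$-orbit). So $Z$ is a normal affine toric variety possessing a fixed point for its torus, hence an affine space on which $A$, and through it $T$, acts linearly.

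Third I would match the two available descriptions of the $T$-stable prime divisors of $Z\cong\mathbb A^{\dim Z}$. On one hand these are the $\dim Z$ coordinate hyperplanes. On the other, under $P^u\times Z\cong P^u\cdot Z=X\setminus\bigcup_{\mathcal D(X)}D$ they correspond to the $B$-stable prime divisors of $X$ meeting $X\setminus\bigcup D$, which --- the colors having been removed --- are exactly the $r$ boundary divisors $X_1,\dots,X_r$; the traces $X_i\cap Z$ are moreover distinct, as the $X_i$ cross transversally along $Y\ni y$. Matching forces $\dim Z=r$, so $Z\cong\mathbb A^r$, the coordinate hyperplanes are the $X_i\cap Z$, and $T$ acts linearly through $\gamma_1,\dots,\gamma_r$, which are linearly independent (otherwise $\mathbb A^r$ would possess fewer than $r$ distinct $T$-invariant hyperplanes). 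This gives~(iii) and the first half of~(ii). For the orbit assertion, in the model $Z=\mathbb A^r$ with independent weights the $T$-orbits are precisely the coordinate strata $\{z:\,z_i=0\iff i\in I\}$ for $I\subseteq\{1,\dots,r\}$; since $X_i\cap Z=\{z_i=0\}$ while the $G$-orbit $O_I$ has closure $\bigcap_{i\in I}X_i$ and meets none of the $X_j$ with $j\notin I$, the intersection $O_I\cap Z$ is exactly one such stratum, i.e.\ a single $T$-orbit.

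The step I expect to require most care is the passage from the infinitesimal to the global picture: establishing that the derived group of $L$ acts trivially on all of $Z$, not merely on $T_yZ$ (dealt with above via the associated graded of the local ring and semisimplicity), and identifying the smooth affine slice with $\mathbb A^r$ --- here smoothness and the bare existence of a fixed point do not suffice, and one genuinely uses that the acting torus is ``large enough'', which is exactly what sphericity of $Z$ together with the triviality of $(L,L)$ guarantees.
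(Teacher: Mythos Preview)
Your argument is correct and considerably more detailed than the paper's own justification, which is essentially a two-line sketch: the paper simply says that (ii) ``is a consequence of the configuration of the $G$-orbit closures in a wonderful variety'' and that (iii) follows from smoothness of $Z$ together with (ii) and Luna's Slice Theorem, with (i) taken directly from \cite{BLV}. You fill in (ii) by an explicit analysis of the normal bundle $T_yZ=\bigoplus N_i$ and a filtration argument showing $(L,L)$ acts trivially on $\mathbb C[Z]$; for (iii) you bypass Luna's Slice Theorem entirely and instead use that a \emph{smooth} affine toric variety with a torus fixed point is an affine space, then pin down $\dim Z=r$ by matching the $T$-stable prime divisors of $Z$ with the traces $X_i\cap Z$. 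This route is more elementary and self-contained, while the paper's appeal to the Slice Theorem is shorter but implicitly still needs the contracting torus action (hence essentially your toric input) to globalize the \'etale slice to an isomorphism $Z\cong T_yZ$.

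One small correction: your parenthetical justification for the linear independence of $\gamma_1,\dots,\gamma_r$ (``otherwise $\mathbb A^r$ would possess fewer than $r$ distinct $T$-invariant hyperplanes'') is not right---linearly dependent but pairwise distinct weights still give exactly $r$ coordinate hyperplanes as the $T$-stable prime divisors, and repeated weights give \emph{more}, not fewer. The independence is nonetheless immediate from what you already proved: since $T$ (through $A$) has a dense orbit in $\mathbb A^r$, the homomorphism $T\to(\mathbb C^*)^r$ given by $(\gamma_1,\dots,\gamma_r)$ is surjective, and pulling back the standard basis of characters yields $r$ linearly independent characters of $T$.
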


\smallskip
Note that (ii) is a consequence
of the configuration of the $G$-orbit closures in a wonderful variety.
To obtain (iii), remark that $Z$ is smooth since 
so is $X$ and thereafter apply (ii) together with Luna Slice Theorem.

Let us now recollect 
how the slice $Z$ is constructed in the case of strict wonderful varieties. 
One may consult ~\cite{BLV} for a general treatment.
Let 
$$
\varphi:X\rightarrow \mathbb P(V)$$
be an embedding of $X$ within the 
projectivization of a finite dimensional $G$-module $V$. 
Thanks to~\cite{P}, we can take $V$ to be simple.
Then $y$ regarded in $\mathbb P(V)$ 
can be written as $[v^-]$ with $v^-$ 
being a $B^-$-eigenvector of $V$ (unique up to a scalar).
Since $L$ is reductive, there exists an $L$-module 
submodule $E\subset V$ 
such that 
$$
V=T_{v^-} G\cdot v^-\oplus E, 
$$ 
where $T_{v^-} G\cdot v^-$ 
stands for the tangent space of the orbit $G\cdot v^-$ at the point $v^-$.
Let $\eta$ be the linear form on $V$ such that 
$\eta(v^-)=1$ and $\eta$ is a $B$-eigenvector.
Let $\mathbb P(\mathbb Cv^-\oplus E)_\eta$ be the open set of $\mathbb P(V)$ on which $\eta$ does not vanish.
Then
$$
Z=\varphi^{-1}\left( \mathbb P(\mathbb Cv^-\oplus E)_\eta\right) .
$$

\begin {thebibliography} {XXX}

\bibitem[A]{A} D. Akhiezer, \textit{Spherical Stein manifolds and the Weyl involution}, 
Ann. Inst. Fourier, Grenoble \textbf{59}, (2009), 3, 1029--1041.

\bibitem[AP]{AP} D. Akhiezer and A. P\"uttmann, \textit{Antiholomorphic involutions of spherical complex spaces},
Proc. Amer. Math. Soc. \textbf{136}, (2008), 5, 1649--1657.

\bibitem[Av]{Av} R. Avdeev, 
\textit{The normalizers of solvable spherical subgroups},
arXiv: 1107.5175.

\bibitem[BJ] {BJ} A. Borel and L. Ji, 
\textit{Compactifications of Symmetric and Locally Symmetric Spaces}, 
Birkh\"auser, 2005.

\bibitem[BS] {BS} A. Borel and J.-P. Serre, \textit{Th\'eor\`emes de finitude et cohomologie galoisienne},
Comment. Math. Helv. \textbf{39} (1964), 111--164. 

\bibitem[BCF]{BCF} P. Bravi and S. Cupit-Foutou, 
\textit{Classification of strict wonderful varieties}, 
Ann. Inst. Fourier, Grenoble \textbf{560}, (2010), 2, 641--681.

\bibitem[Br1] {Br97} M. Brion, \textit{Vari\'et\'es sph\'eriques}, Notes de la session de la S. M. F. ''Op\'erations hamiltoniennes et op\'erations de groupes alg\'ebriques", Grenoble, 1997, 1--60.

\bibitem[Br2] {Br2} M. Brion, \textit{The total coordinate ring of a wonderful variety}, J. Algebra \textbf{313}, (2007), 1, 61--99.

\bibitem[BLV] {BLV} M. Brion, D. Luna and T. Vust, \textit{Espaces homog\`enes sph\'eriques}, Invent. Math. \textbf{84} (1986), 617--632.

\bibitem[C] {C} C. Chevalley, \textit{Th\'eorie des groupes de Lie}, Hermann, 
Paris, 1968.

\bibitem[DP]{DP83} C. De Concini and C. Procesi, \textit{Complete symmetric varieties}, Invariant theory (Montecatini, 1982), Lecture Notes in Math., 996, Springer, Berlin, 1983, 1--44.

\bibitem[De] {De} C. Delaunay, \textit{Real structures on compact toric varieties}, Th\`ese, Universit\'e Louis Pasteur, Strasbourg, 2004.

\bibitem[K]{K} F. Knop, \textit{Automorphisms, root systems, and compactifications of homogeneous varieties}, J. Amer. Math. Soc.\textbf 9 (1996), 153--174.

\bibitem[Lo] {Lo} I. Losev, \textit{Uniqueness property for spherical homogeneous spaces}, Duke Math. J. \textbf{147} (2009), 2, 315--343.

\bibitem[Lu1] {Lu96} D. Luna, \textit{Toute vari\'et\'e magnifique est sph\'erique}, Transform.\ Groups \textbf{1} (1996), 3, 249--258.

\bibitem[Lu2] {Lu01} D. Luna, 
\textit{Vari\'et\'e sph\'eriques de type A}, 
Publ. Math. Inst. Hautes \'Etudes Sci. \textbf{94} (2001), 161--226.

\bibitem[LV] {LV} D. Luna and T. Vust, \textit{Plongements d'espaces homog\`enes}, Comment. Math. Helv., \textbf{58} (1983), 186--245.

\bibitem[P]{P} G. Pezzini, \textit{Simple immersions of wonderful varieties}, Math. Z., \textbf{255} (2007), 793--812.

\bibitem[S] {S} J.-P. Serre, \textit{Galois cohomology}, Springer-Verlag, Berlin-Heidelberg, 1997. 

\bibitem[W] {W} J. Wolf, \textit{The action of a real semisimple group on a complex flag manifold I. Orbit structure and holomorphic arc components}, Bull. AMS, \textbf{75} (1969), 1121--1237.

\end{thebibliography}
\end{document}